\newcommand{\cf}{cf.\ }
\numberwithin{theorem}{section}
\DeclareMathOperator{\F}{\mathcal{F}}
\DeclareMathOperator{\E}{\mathcal{E}}
\DeclareMathOperator{\rk}{rk}
\DeclareMathOperator{\schwartz}{\mathcal{S}}
\newcommand{\symb}{\Gamma}
\newcommand{\symbcl}{\Gamma_{\mathrm{cl}}}
\newcommand{\symbclsg}{\mathrm{SG}_{\cl(\xi)}}
\newcommand{\symbsg}{\mathrm{SG}}
\newcommand{\calc}{\Op \Gamma}
\newcommand{\calccl}{\Op \Gamma_{\mathrm{cl}}}
\newcommand{\calcsg}{\Op \symbsg}
\newcommand{\sobsg}{H_{\mathrm{SG}}}
\newcommand{\WFiso}{\WF_{\mathrm{iso}}}
\newcommand{\hamvf}{\mathsf{H}}
\newcommand{\cl}{\mathrm{cl}}
\newcommand{\Xray}{\mathsf{X}}
\newcommand{\phired}{\phi_{\mathrm{red}}}
\newcommand{\rd}{F}
\newcommand{\pu}{\widetilde{U}_0}
\title[Recurrence of singularities]{Recurrence of singularities for second order isotropic pseudodifferential operators}
\author[M. Doll]{Moritz Doll}
\address{Institut für Analysis, Leibniz Universität Hannover, Welfengarten 1, \newline\indent D-30167 Hannover, Germany}
\email{doll[AT]math.uni-hannover.de}
\thanks{The author would like to thank Oran Gannot, Elmar Schrohe, and Jared Wunsch for useful discussions. This research was supported by the DFG GRK-1463.}
\begin{document}

\begin{abstract}
    Let $H$ be a self-adjoint isotropic elliptic pseudodifferential operator of order $2$.
    Denote by $u(t)$ the solution of the Schrödinger equation $(i\pa_t - H)u = 0$ with initial data $u(0) = u_0$.
    If $u_0$ is compactly supported the solution $u(t)$ is smooth for small $t > 0$, but not for all $t$.
    We determine the wavefront set of $u(t)$ in terms of the wavefront set of $u_0$ and the principal and subprincipal symbol of $H$.
\end{abstract}

\maketitle

\section{Introduction}
It is well-known that the harmonic oscillator $H_0 = 1/2(\Delta + |x|^2)$ on $\RR^d$ has the property that
for compactly supported initial data $u_0 \in \E'(\RR^d)$, the solution $u(t) = e^{-itH_0}u_0$ to the dynamical Schrödinger equation
is smooth for $t \not \in \pi \ZZ$ and $u(\pi k) = (-iR)^ku$, where $Ru(x) = u(-x)$ is the reflection operator.
In particular, we can calculate the wavefront set of $u(t)$:
\begin{equation}
    \WF(u(t)) =
    \begin{cases}
        (-1)^k\WF(u_0) & t = \pi k, k\in \ZZ,\\
        \emptyset & t \not \in \pi \ZZ.
    \end{cases}
\end{equation}

The isotropic (Shubin) symbols of order $m \in \RR$, $\symb^m(\RR^d)$, are given by functions $a \in \CI(\RR^{2d})$ such that for all $\alpha,\beta \in \NN^d$,
\begin{align*}
    \abs{\pa_x^\alpha \pa_\xi^\beta a(x,\xi)} \lesssim_{\alpha,\beta} \ang{(x,\xi)}^{m-|\alpha|-|\beta|}.
\end{align*}
If $a$ admits an asymptotic expansion in homogeneous terms $a_{m-j}$ of order $m-j$ in $(x,\xi)$,
\begin{align*}
    a \sim \sum_{j>0} a_{m-j},
\end{align*}
then we say $a$ is classical, $a \in \symbcl^m$.
We call a symbol \emph{elliptic} if $|a_m(x,\xi)| > 0$ for $(x,\xi) \not = 0$.

To each function $a$ we associate a pseudodifferential operator by the Weyl quantization,
\begin{align*}
    (a^w(x,D)u)(x) = (2\pi)^{-d} \int_{\RR^{2d}} e^{i(x-y)\xi} a( (x+y)/2,\xi) u(y)\,dy\,d\xi,\quad u\in \schwartz.
\end{align*}
We set $\calc^m = \{a^w(x,D)\colon a \in \symb^m\}$, the class of isotropic pseudodifferential operators of order $m$.

Let $p \in \symbcl^2$ be a real-valued classical elliptic isotropic symbol of order $2$ and set
\begin{align*}
    H &= p^w(x,D) \qquad \text{and}\qquad
    H_0 = p_2^w(x,D),
\end{align*}
the pseudodifferential operator and the ``free'' operator\footnote{The notion ``free'' is borrowed from scattering theory.}, respectively.
We consider the dynamical Schrödinger equation:
\begin{equation}\label{eq:schroe}
    \left\{\begin{aligned}
        (i\pa_t - H)u(t) &= 0\\
        u(0) &= u_0.
    \end{aligned}\right.
\end{equation}
We seek to describe the wavefront set of $u(t)$ in terms of the singularities of $u_0$.

We denote the propagator of the equation by $U(t) = e^{-itH}$, similarly $U_0(t) = e^{-itH_0}$ for the free equation.
We proceed in two steps: First, we calculate the wavefront set for the free propagator and then for the reduced propagator
$F(t) = U_0(-t)U(t)$.

Denote by $\hamvf_0(x,\xi) = \pa_\xi p_2 \pa_x - \pa_x p_2 \pa_\xi$ the Hamiltonian vector field associated to the Hamiltonian function $p_2$ and $t \mapsto \exp(t\hamvf_0)$ its flow.
Let $t > 0$ be arbitrary.
We write
\begin{align*}
    \exp(t\hamvf_0)(y,\eta) = (x(t,y,\eta),\xi(t,y,\eta)).
\end{align*}
Let $\Gamma_t = \left\{\eta \in \RR^d\setminus 0 \colon \exp(t\hamvf_0)(0,\eta) \in 0 \times \RR^d\right\}$ and
define the function $\Xi_t : \Gamma_t \to \RR^d$, which is given by $\Xi_t(\eta) = \xi(t,0,\eta)$. It satisfies
\begin{align*}
    \exp(t\hamvf_0)(0,\eta) &= (0,\Xi_t(\eta)).
\end{align*}
Note that $\Xi_t$ is homogeneous of degree one.
Set $G_t = \supp U_0(t)u \times \Xi_t(\Gamma_t)$.

In the following we assume that the manifolds $\Lambda_t = \left\{(x,y,\xi,\eta)\colon \exp(t\hamvf_0)(y,\eta) = (x,\xi)\right\}$
and $0 \times \RR^{2d} \setminus \{0\}$ intersect cleanly for all $t \in \RR$.

\begin{proposition}\label{prop:wf-free}
    Assume that $u \in \schwartz'$ is a tempered distribution.
    The wavefront set of $U_0(t)u$ satisfies
    \begin{align*}
        \WF(U_0(t)u) \cap G_t \subset
        \left\{ \left(x,\Xi_t(\eta)\right) \in G_t \colon y - \pa_\eta \langle x, \Xi_t(\eta)\rangle \bot\, \Gamma_t, (y,\eta) \in \WF(u) \right\}.
    \end{align*}
    If $u \in \E'$, there cannot appear any other singularities:
    \begin{align*}
        \WF(U_0(t)u) \subset \left\{ \left(x,\Xi_t(\eta)\right)\in G_t \colon y - \pa_\eta \langle x, \Xi_t(\eta)\rangle \bot\, \Gamma_t, (y,\eta) \in \WF(u) \right\}.
    \end{align*}
\end{proposition}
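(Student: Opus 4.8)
The plan is to realise $U_0(t)$ as a Fourier integral operator of isotropic (Shubin) type associated with the homogeneous canonical transformation $\kappa_t:=\exp(t\hamvf_0)$, and then to recover $\WF(U_0(t)u)$ from the wavefront set of its Schwartz kernel composed with $\WF(u)$. The answer is not the naive image $\kappa_t(\WF(u))$, because the ordinary $C^\infty$-wavefront set records only the frequency end of isotropic phase space over finite base points, whereas $\kappa_t$ mixes the spatial and frequency ends: a singularity living over a finite point is generically transported ``to infinity'' and reappears over a finite point only when its frequency is asymptotic to the cone $\Gamma_t$. That is precisely the geometry encoded in the statement.

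First I would construct a parametrix by geometric optics. Since $p_2$ is real, elliptic and homogeneous of degree $2$, a Hamilton--Jacobi construction with initial datum $S_0(x,\eta)=\langle x,\eta\rangle$ yields, for each $t$ and microlocally near a point of $\Lambda_t$, a phase $S_t(x,\eta)$, homogeneous of degree $2$, that generates $\kappa_t$ in the mixed variables $(x,\eta)$; solving the transport equations produces an isotropic amplitude $a_t$, and
\begin{align*}
    \widetilde K_t(x,y)=\int_{\RR^d} e^{i\,(S_t(x,\eta)-\langle y,\eta\rangle)}\,a_t(x,\eta)\,\ddbar\eta
\end{align*}
is a parametrix. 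Near the focusing set the mixed coordinates on $\Lambda_t$ degenerate, and there the assumed clean intersection of $\Lambda_t$ with $0\times\RR^{2d}\setminus\{0\}$ provides a uniform representation in Hörmander's clean-phase formalism, with additional fibre variables parametrising the degeneracy. Uniqueness for \eqref{eq:schroe} with $H=H_0$ then gives that $U_0(t)$ differs from the operator $\widetilde U_0(t)$ with kernel $\widetilde K_t$ by one which is smoothing in the isotropic sense, hence invisible to $\WF$.

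The crux is the $C^\infty$-wavefront set of $K_t$, which — unlike that of a standard Fourier integral operator — is a proper subset of (the twist of) the graph of $\kappa_t$. Nonstationary phase confines $\WF'(K_t)$ to the stationary set $y=\pa_\eta S_t(x,\eta)$, $\xi=\pa_x S_t(x,\eta)$, i.e.\ to the graph of $\kappa_t$; but a point $(x,\xi,y,-\eta)$ of that graph belongs to the ordinary wavefront set only if it is a limit, along the graph, of points whose base $(x,y)$ stays bounded while the covector escapes to infinity. Writing the covector direction as $\eta=R\omega$ with $R\to\infty$ and using that $\kappa_t$ is homogeneous of degree $1$, we have $\kappa_t(y,R\omega)=R\,\kappa_t(y/R,\omega)=R\,\kappa_t(0,\omega)+O(1)$, so the $x$-component stays bounded precisely when $\kappa_t(0,\omega)\in 0\times\RR^d$, that is $\omega\in\Gamma_t$, and then the momentum is asymptotic to $\Xi_t(\omega)$. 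Since $\kappa_t(0,\cdot)$ carries $\Gamma_t$ into $0\times\RR^d$, the function $S_t(0,\cdot)$ and its $\eta$-gradient vanish on $\Gamma_t$, so a Taylor expansion in $x$ gives $\pa_\eta S_t(x,\eta)=\pa_\eta S_t(0,\eta)+\pa_\eta\langle x,\Xi_t(\eta)\rangle+\dots$, with $\pa_\eta S_t(0,\eta)$, the gradient of a function vanishing to second order along $\Gamma_t$, normal to $\Gamma_t$; since $y=\pa_\eta S_t(x,\eta)$ on $\Lambda_t$, carrying this through (and, in the caustic regions, arguing instead from the clean-phase representation) yields the constraint $y-\pa_\eta\langle x,\Xi_t(\eta)\rangle\bot\,\Gamma_t$. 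Collecting this, over finite base points $\WF'(K_t)$ is contained in
\begin{align*}
    \bigl\{\,(x,\Xi_t(\eta),y,-\eta)\colon\eta\in\Gamma_t,\ y-\pa_\eta\langle x,\Xi_t(\eta)\rangle\bot\,\Gamma_t\,\bigr\}.
\end{align*}

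Finally I would apply the standard wavefront-set calculus: $\WF(U_0(t)u)\subset\WF'(K_t)\circ\WF(u)$, where the two auxiliary terms (points of $\WF'(K_t)$ with trivial $x$- or $y$-covector) contribute nothing, since over finite base points every point of $\WF'(K_t)$ has $\eta\in\Gamma_t\subset\RR^d\setminus\{0\}$ and $\WF(U_0(t)u)$ lies over $\supp U_0(t)u$. Composing the displayed inclusion with $\WF(u)$ yields exactly
\begin{align*}
    \bigl\{\,(x,\Xi_t(\eta))\in G_t\colon y-\pa_\eta\langle x,\Xi_t(\eta)\rangle\bot\,\Gamma_t,\ (y,\eta)\in\WF(u)\,\bigr\}.
\end{align*}
For general $u\in\schwartz'$ the parametrix controls only the part of $\WF(U_0(t)u)$ lying over $G_t$: outside $G_t$ further singularities may be produced from the behaviour of $u$ at the spatial and the corner ends of isotropic phase space — exactly the data which $\kappa_t$ can transport onto the frequency end over a finite point — whence the intersection with $G_t$ in the first assertion. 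If $u\in\E'$ there is no such data, and the composition already accounts for all of $\WF(U_0(t)u)$, which is the second, unrestricted inclusion. The principal obstacle is the middle step: controlling the finite wavefront set of $K_t$ uniformly through the focusing set, where the homogeneity of $\kappa_t$ must be combined with the clean-intersection hypothesis to handle the caustics of the phase.
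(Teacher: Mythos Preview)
Your strategy—compute $\WF$ of the Schwartz kernel $K_t$ and then compose with $\WF(u)$—is the paper's as well, and your scaling heuristic for why only directions $\eta\in\Gamma_t$ survive is the right intuition. The gap is that your argument for $\WF'(K_t)$ is not a proof. The parametrix you write down has phase $S_t(x,\eta)-\langle y,\eta\rangle$ with $S_t$ homogeneous of degree~$2$ in $(x,\eta)$; H\"ormander's Theorem~8.1.9, which identifies the wavefront set of an oscillatory integral with the critical set of its phase, requires the phase to be homogeneous of degree~$1$ in the fibre variables and the amplitude to be a Kohn--Nirenberg symbol, and neither holds here. Your sentence that a point of the graph ``belongs to the ordinary wavefront set only if it is a limit, along the graph, of points whose base $(x,y)$ stays bounded while the covector escapes to infinity'' is a description of what should happen, not a mechanism for proving it. The paper's main technical work (Proposition~\ref{prop:parametrix-free}) is precisely to close this gap: after writing the long-time parametrix with a degree-$2$ phase in many fibre variables, it uses the clean-intersection hypothesis to perform stationary phase in the directions transverse to $C_{\phi,0}$—via a tailored stationary-phase lemma for phases of the form $\lambda^2\phi_0+\lambda\psi$ (Proposition~\ref{prop:stat-phase})—and arrives at an honest degree-$1$ oscillatory integral over $\Gamma_t$ with phase $\phired(x,y,\eta)=\langle x,\Xi_t(\eta)\rangle-\langle y,\eta\rangle$ and a Kohn--Nirenberg amplitude, to which Theorem~8.1.9 then applies directly.

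Your handling of the $u\in\E'$ case is also incomplete: ``there is no such data'' does not rule out singularities with $\xi_0\notin\Xi_t(\Gamma_t)$. The paper argues this via the isotropic wavefront set, and this is where that calculus enters essentially. If such a point $(x_0,\xi_0)$ lay in $\WF(U_0(t)u)$, Lemma~\ref{lem:wfiso-smooth} would force $(0,\xi_0)\in\WFiso(U_0(t)u)$; the Egorov-type Proposition~\ref{prop:wfiso-free} then places $\exp(-t\hamvf_0)(0,\xi_0)$ in $\WFiso(u)$, and by the very definition of $\Gamma_t$ this point has nonzero spatial component, contradicting Lemma~\ref{lem:wfiso-compact} for compactly supported $u$. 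Your sketch supplies no substitute for this step.
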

\begin{remark}
    If $p_2(x,\xi) = 1/2 (|\xi|^2 + \sum_j \omega_j x_j^2)$ then this proposition follows from Mehler's formula (see Section \ref{sec:ex}).
\end{remark}

We follow the notation of \cite{DGW17} and denote the integral over the flow of $\hamvf_0$ by $\Xray_t$ for any $t \in \RR$:
\begin{align*}
    \Xray_tf = \int_0^t f \circ \exp(s\hamvf_0) ds.
\end{align*}
The wavefront set of the reduced propagator can be completely determined:
\begin{proposition}\label{prop:wf-reduced}
    Let $u \in \schwartz'$ and $t \in \RR$. Then
    \begin{align*}
        \WF(F(t)u) = \left\{ (x + \pa_\xi \Xray_tp_1(0,\xi), \xi) \colon (x,\xi) \in \WF(u) \right\}.
    \end{align*}
\end{proposition}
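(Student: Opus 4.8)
The plan is to peel off from $F(t)$ a single ``gauge'' Fourier integral operator carrying the geometric drift, leaving behind a zeroth order isotropic operator which is harmless for the ordinary wavefront set. First I would record the equation for the reduced propagator: differentiating $F(t) = U_0(-t)U(t)$ and using that $H_0$ and $H$ each commute with their own propagator gives
\begin{align*}
    \pa_t F(t) = -iQ(t)F(t), \qquad F(0) = I, \qquad Q(t) = U_0(-t)(H - H_0)U_0(t).
\end{align*}
Since $H - H_0 \in \calc^1$ has principal symbol $p_1$, the isotropic Egorov theorem gives $Q(t) \in \calc^1$ with principal symbol $q_1(t) = p_1 \circ \exp(t\hamvf_0)$, depending smoothly on $t$ with symbol seminorms locally uniform in $t$.

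Next I would introduce $g_t = \Xray_t p_1 \in \symb^1$, which is real-valued, homogeneous of degree one in $(x,\xi)$, and satisfies $\pa_t g_t = p_1 \circ \exp(t\hamvf_0) = q_1(t)$ by the fundamental theorem of calculus; in particular $g_t^w$ is self-adjoint and $e^{ig_t^w}$ is unitary. Setting $R(t) = e^{ig_t^w}F(t)$, a Duhamel expansion of $\pa_t e^{ig_t^w}$, together with the fact that conjugation by $e^{irg_t^w}$ lowers isotropic orders by one (Egorov, since $[g_t^w, A] \in \calc^{k-1}$ for $A \in \calc^k$), should give $\pa_t R(t) = S(t)R(t)$, $R(0) = I$, with $S(t) \in \calc^0$ a smooth family: the order-one terms cancel because $q_1(t)^w - Q(t) \in \calc^0$ (as $\sigma_1(Q(t)) = q_1(t)$) and $e^{ig_t^w}Q(t)e^{-ig_t^w} - Q(t) \in \calc^0$. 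Solving this transport equation --- building the full symbol of $R(t)$ order by order and Borel summing --- then yields $R(t) \in \calc^0$; moreover $R(t)$ is unitary, hence elliptic, as is $R(t)^{-1} = R(t)^* \in \calc^0$. Since isotropic operators of order $0$ are pseudolocal in the ordinary sense, $\WF(R(t)u) = \WF(u)$ for every $u \in \schwartz'$, and the proof reduces to computing $\WF(e^{-ig_t^w}v)$ for $F(t) = e^{-ig_t^w}R(t)$.

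For that I would use homogeneity: since $g_t$ is homogeneous of degree one in $(x,\xi)$, for $|\xi| \to \infty$ with $x$ bounded one has $g_t(x,\xi) = g_t(0,\xi) + O(1)$, and likewise for all derivatives, so over any bounded set of base points $g_t^w$ is a classical first order operator on $\RR^d$ whose ordinary principal symbol is $g_t(0,\xi)$, independent of $x$. The Cauchy problem $\pa_s w = -ig_t^w w$, $w(0) = v$, then has real principal type full symbol $\tau + g_t(0,\xi)$, so H\"ormander's theorem on propagation of singularities identifies $\WF(w(s))$ with the image of $\WF(v)$ under the time-$s$ Hamilton flow of $g_t(0,\xi)$, namely $(x,\xi) \mapsto (x + s\,\pa_\xi g_t(0,\xi), \xi)$. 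Taking $s = 1$, $g_t = \Xray_t p_1$, and recalling $\WF(R(t)u) = \WF(u)$, this gives
\begin{align*}
    \WF(F(t)u) = \left\{ (x + \pa_\xi \Xray_t p_1(0,\xi),\, \xi) \colon (x,\xi) \in \WF(u) \right\}.
\end{align*}
Only the base point is shifted because the $\xi$-component of the flow picks up the bounded correction $-\pa_x g_t(0,\xi)$, which is invisible to the conic ($|\xi|\to\infty$) wavefront set; both inclusions hold at once since $v \mapsto e^{-ig_t^w}v$ and $u \mapsto R(t)u$ are invertible with inverses of the same type.

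The step I expect to be the main obstacle is the middle one: making the isotropic Egorov theorem for $Q(t)$ quantitative and smooth in $t$, and then checking carefully that the transport equation really keeps $R(t)$ inside $\calc^0$ --- in particular that the Duhamel remainder in $\pa_t e^{ig_t^w}$ is genuinely of order $0$ rather than $1$. A smaller point is that $g_t^w$ is not properly supported; this is harmless, since propagation of singularities is microlocal and, over bounded base points, $g_t^w$ coincides with a properly supported classical operator up to a smoothing term.
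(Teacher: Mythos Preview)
Your approach is correct but takes a genuinely different route from the paper's. The paper works directly in the SG-calculus: it observes that $Q(t)\in\symbcl^1\subset\symbsg^{1,1}$ has $\psi$-principal symbol $(p_1\circ\exp(t\hamvf_0))(0,\xi)$, and then applies a general SG propagation-of-singularities theorem (proved by a commutator argument, constructing $q(t,x,\xi)$ with $[i\partial_t-Q(t),q(t,x,D)]$ smoothing and $\sigma^\psi q(t)$ transported by the Hamilton flow of $\sigma^\psi Q(t)$) to the time-dependent reduced equation in one stroke. You instead factor $F(t)=e^{-ig_t^w}R(t)$, isolate the drift in a single explicit ``gauge'' operator, and show the remainder $R(t)\in\calc^0$ is elliptic and hence preserves $\WF$. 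Your route has the structural payoff of making the shift visible as the action of a concrete FIO---this is in the spirit of the explicit parametrix used in \cite{DGW17} for the harmonic oscillator---while the paper's route is shorter once the SG machinery is in place. The step you flag as the main obstacle, getting $R(t)\in\calc^0$, is actually routine once you note that $R(t)$ and $R(t)^{-1}=R(t)^*$ preserve $\schwartz$ (Duhamel against an asymptotic symbolic parametrix then forces the difference into $\calc^{-\infty}$). The genuine subtlety is rather in your final step: over bounded $x$, the operator $g_t^w$ differs from a properly supported classical first-order operator by a term of order \emph{zero}, not a smoothing one; this is still harmless for propagation since only the principal symbol matters, but it means your appeal to H\"ormander's theorem is really using the identification $\sigma^\psi(g_t)=g_t(0,\xi)$---i.e., the time-independent special case of the same SG propagation statement the paper proves.
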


Combining Proposition~\ref{prop:wf-free} and Proposition~\ref{prop:wf-reduced} yields
\begin{theorem}\label{thm:recurrence}
    Let $u \in \E' + \schwartz$ and $t \in \RR$. The classical wavefront set of $U(t)u$ satisfies
    \begin{align*}
        \WF(U(t)u) \subset \left\{\left(x,\Xi_t(\eta)\right)\in G_t \colon \pa_\eta\langle x, \Xi_t(\eta)\rangle-\pa_\eta\Xray_t p_1(0,\eta)-y \bot\, \Gamma_t, (y,\eta) \in \WF(u) \right\}.
    \end{align*}
\end{theorem}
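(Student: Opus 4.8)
The plan is to exploit the factorization $U(t)=U_0(t)\circ F(t)$, immediate from $F(t)=U_0(-t)U(t)$ and the group law $U_0(t)U_0(-t)=\mathrm{Id}$, and to propagate singularities through the two factors in succession: read off $\WF(F(t)u)$ from Proposition~\ref{prop:wf-reduced}, then apply Proposition~\ref{prop:wf-free} to $U_0(t)$ acting on $F(t)u$.

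Concretely, set $v=F(t)u$. Since $u\in\E'+\schwartz$ and $F(t)$ maps $\E'+\schwartz$ into itself, write $v=v_1+v_2$ with $v_1\in\E'$ and $v_2\in\schwartz$. As $H_0$ is elliptic in the isotropic calculus, $U_0(t)$ preserves $\schwartz$, so $U_0(t)v_2\in\schwartz$ has empty wavefront set; hence $\WF(U(t)u)=\WF(U_0(t)v)=\WF(U_0(t)v_1)$, and likewise $\WF(v_1)=\WF(v)$. By Proposition~\ref{prop:wf-reduced},
\begin{align*}
    \WF(v_1)=\WF(v)=\bigl\{\,(y+\pa_\eta\Xray_tp_1(0,\eta),\eta)\ \colon\ (y,\eta)\in\WF(u)\,\bigr\},
\end{align*}
so $(\tilde y,\eta)\in\WF(v_1)$ exactly when $\tilde y=y+\pa_\eta\Xray_tp_1(0,\eta)$ for some $(y,\eta)\in\WF(u)$. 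Applying Proposition~\ref{prop:wf-free} to the compactly supported distribution $v_1$ confines $\WF(U_0(t)v_1)$ to the corresponding set $G_t$ and allows $(x,\Xi_t(\eta))\in\WF(U_0(t)v_1)$ only if there is $(\tilde y,\eta)\in\WF(v_1)$ with $\tilde y-\pa_\eta\langle x,\Xi_t(\eta)\rangle\bot\,\Gamma_t$. Substituting the expression for $\tilde y$ and using that orthogonality to $\Gamma_t$ is insensitive to sign, this constraint becomes
\begin{align*}
    \pa_\eta\langle x,\Xi_t(\eta)\rangle-\pa_\eta\Xray_tp_1(0,\eta)-y\bot\,\Gamma_t,\qquad (y,\eta)\in\WF(u),
\end{align*}
which is exactly the claimed description of $\WF(U(t)u)$. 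The clean-intersection hypothesis on the $\Lambda_t$ is assumed globally, so both propositions apply.

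I expect the only real obstacle to be the bookkeeping where the two propositions meet, not any new idea. First, one must check that $F(t)u$ lies in the class $\E'+\schwartz$ for which the \emph{sharp} form of Proposition~\ref{prop:wf-free} holds — the inclusion \emph{without} the intersection with $G_t$. This is plausible because $F(t)=U_0(-t)U(t)$ is a composition of Fourier integral operators whose principal parts cancel, leaving the canonical graph $(y,\eta)\mapsto(y+\pa_\eta\Xray_tp_1(0,\eta),\eta)$ of Proposition~\ref{prop:wf-reduced}, which displaces points only by the bounded (degree-zero homogeneous) vector $\pa_\eta\Xray_tp_1(0,\eta)$; but it does require an argument. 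Second, one must reconcile the support set $G_t$ in the statement with the one attached to $U_0(t)v_1$; here one uses that a wavefront set always projects into the support of the distribution, together with the direction constraint $\eta\in\Gamma_t$ supplied by Proposition~\ref{prop:wf-free}. Granting these two points, Theorem~\ref{thm:recurrence} is a direct substitution of the canonical relation of $F(t)$ into the wavefront bound for the free propagator $U_0(t)$.
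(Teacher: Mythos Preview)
Your approach is exactly the paper's: factor $U(t)=U_0(t)F(t)$ and feed Proposition~\ref{prop:wf-reduced} into Proposition~\ref{prop:wf-free}; the paper says no more than ``Combining Proposition~\ref{prop:wf-free} and Proposition~\ref{prop:wf-reduced} yields'' the theorem.

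On the gap you flag: the assertion that $F(t)$ preserves $\E'+\schwartz$ is neither obvious nor the right thing to prove. Look at the proof of the $\E'$ case of Proposition~\ref{prop:wf-free}: compact support of $u$ enters only through Lemma~\ref{lem:wfiso-compact}, i.e.\ only via the consequence $\WFiso(u)\subset\{0\}\times\RR^d$. For $u\in\E'+\schwartz$ this inclusion holds, and Proposition~\ref{prop:wfiso-reduced} gives $\WFiso(F(t)u)=\WFiso(u)\subset\{0\}\times\RR^d$. Hence the argument for the sharp form of Proposition~\ref{prop:wf-free} applies directly to $F(t)u\in\schwartz'$, with no need to decompose it as a compactly supported piece plus a Schwartz remainder. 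This is the intended role of Proposition~\ref{prop:wfiso-reduced}, even though the paper does not cite it explicitly in the one-line derivation of the theorem. Your heuristic about the bounded displacement $\pa_\eta\Xray_tp_1(0,\eta)$ is morally on point but does not by itself force $F(t)u$ back into $\E'+\schwartz$; routing through the isotropic wavefront set avoids the issue entirely.
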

\subsection*{History}
The usual setting is the Laplacian on $\RR^d$ plus a potential perturbations, that is
\begin{align*}
    H = H_0 + V,
\end{align*}
with $H_0$ the harmonic oscillator and $V = V(x)$ a potential perturbation.
Zelditch~\cite{Zelditch83} proved that for $V \in S^0_{\cl}$ that the singular support of $e^{-itH}u$ is equal to the singular support of $e^{-itH_0}u$ by calculating the Schwartz-kernel.
This was improved by Weinstein~\cite{Weinstein85} to show that the wavefront sets are equal.
Further results for zeroth order perturbations were obtained by Kapitanski, Rodnianski and Yajima~\cite{KapRodYaj97}, Wunsch~\cite{Wunsch99}, Mao and Nakamura~\cite{MaoNak09}.
In the case $V \in S^1_{\cl}$, Doi~\cite{Doi03} and Mao~\cite{Mao14} showed for the harmonic oscillator that singularities are shifted.
For the anharmonic oscillator there appear weaker singularities even for potentials $V \in \CcI$ (\cf Doi~\cite{Doi03}).

Recurrence of singularities for the harmonic oscillator with perturbation by a pseudodifferential operator in the isotropic calculus was proved in \cite{DGW17}.

\subsection*{Outline}
The paper is structed as follows:
In the second section we recall basic properties about the isotropic calculus and the SG-calculus.
The main part of the paper is Section 3. We construct a parametrix as an oscillatory integral for the free propagator for arbitrary large times $t$ and determine the wavefront set
after reducing the oscillatory integral such that the phase is homogeneous of degree one in the fiber-variables.
Section 4 treats the reduced propagator. There, we use a commutator argument in the SG-calculus to determine the classical wavefront set.
The fifth section is a refined version of the stationary phase lemma, where the phase is not linear in the large parameter $\lambda$.
We conclude with two examples, where the leading term is a quadratic form, to illustrate the results.
\section{Global pseudodifferential calculi}
We recall some facts of the SG-calculus and the isotropic calculus.
The SG-calculus is due to Cordes~\cite{Cordes95}, the corresponding wavefront sets at infinity can be found in Coriasco and Maniccia~\cite{CoMa03}.
Shubin (\cf \cite{Shubin78} and References therein) introduced the isotropic calculus. The isotropic wavefront set was added by Hörmander~\cite{Hormander91}. A self-contained introduction into global pseudodifferential calculi can be found in \cite{NiRo10}.

\subsection{Isotropic calculus}
We have defined the isotropic symbols already in the introduction. Now we will review basic properties.
The differential operators of the form
\begin{align*}
    \sum_{|\alpha|+|\beta| \leq m} a_{\alpha,\beta} x^\alpha D^\beta
\end{align*}
are in $\calc^m$. We have the property that the commutator of isotropic symbols is \emph{two} orders lower then the sum of the orders,
therefore we can view operators of order one in a sense as a perturbation. There is a scale of Sobolev spaces adapted to the isotropic
calculus, but we will not need them. These properties are needed to prove Proposition~\ref{prop:wfiso-reduced}, but this was already done in \cite{DGW17}.

For $u \in \schwartz'(\RR^d)$, we define the \emph{isotropic wavefront set} as the subset of $\RR^{2d} \setminus \{0\}$ such that
\begin{align*}
    \WFiso(u) = \bigcap_{\substack{A \in \calccl^0\\Au \in \schwartz(\RR^d)}} \Sigma_0(A),
\end{align*}
where $\Sigma_0(A)$ is the set of characteristic points,
\[\Sigma_0(A) = \left\{(x,\xi) \in \RR^{2d}\setminus \{0\} \colon a_0(x,\xi) = 0\right\}\]

The isotropic wavefront set is an obstruction for a tempered distribution to be in $\schwartz(\RR^d)$, that is
$\WFiso(u) = \emptyset$ if and only if $u \in \schwartz(\RR^d)$.

Now we will investigate the relationship between smoothness and the isotropic wavefront set.
The following Lemma is a special case of Proposition~2.6 by Hörmander~\cite{Hormander91}.
\begin{lemma}\label{lem:wfiso-compact}
    Let $u \in \E'$. Then the isotropic wavefront set is contained in the horizontal space,
    \begin{align*}
        \WFiso(u) \subset \{0\} \times \RR^d.
    \end{align*}
\end{lemma}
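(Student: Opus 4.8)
The plan is to show that if $(x_0,\xi_0)$ with $x_0 \neq 0$ lies in $\RR^{2d}\setminus\{0\}$, then $(x_0,\xi_0) \notin \WFiso(u)$ whenever $u \in \E'$. Since $\WFiso(u)$ is conic, it suffices to exclude all such points, which forces $\WFiso(u) \subset \{0\}\times\RR^d$. First I would exploit the fact that a compactly supported distribution $u$ lies in some ordinary Sobolev space $H^s(\RR^d)$ and, more to the point, that multiplication by cut-offs in $x$ is a harmless operation: choose $\chi \in \CcI(\RR^d)$ with $\chi \equiv 1$ near $\supp u$, so $\chi u = u$. The key observation is that, microlocally near a point with $x_0 \neq 0$, one can build an operator $A \in \calccl^0$ whose principal symbol $a_0$ is nonvanishing at $(x_0,\xi_0)$ but which acts essentially as multiplication by a function supported away from $\supp u$ in the $x$-variable.

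Concretely, the second step is to pick a function $\psi \in \CcI(\RR^d)$ that vanishes on a neighbourhood of $\supp u$ but with $\psi(x_0/|x_0|) \neq 0$ after rescaling — more carefully, one wants a zeroth-order classical isotropic symbol $a(x,\xi)$ whose leading homogeneous part $a_0$ is a function of $x/|(x,\xi)|$ alone (or of the angular variable), chosen so that $a_0(x_0,\xi_0) \neq 0$ while $a_0$ is supported, in the $x$-direction, in a cone that meets $\supp u$ only at the origin. Because $x_0 \neq 0$, the ray through $(x_0,\xi_0)$ points into a region where the $x$-component is bounded below by a positive multiple of $|(x,\xi)|$, so such an angular cut-off can be arranged to avoid a neighbourhood of $\supp u$. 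Then $Au = a^w(x,D)u$; one shows $Au \in \schwartz(\RR^d)$ by a standard pseudolocal/support argument: the Schwartz kernel of $A$ is smooth and rapidly decaying off the diagonal, and the support condition on $a_0$ (together with lower-order corrections, which can be absorbed) forces $Au$ to be smooth and rapidly decreasing. Hence $(x_0,\xi_0) \in \Sigma_0(A)^c$, giving $(x_0,\xi_0) \notin \WFiso(u)$.

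An alternative and perhaps cleaner route, which I would mention, is to invoke the cited Proposition~2.6 of Hörmander~\cite{Hormander91} directly: that result characterizes $\WFiso$ in terms of the behaviour of $u$ at infinity in phase space, and since $u \in \E'$ is already concentrated in a bounded set in $x$, the only directions at infinity that can carry isotropic wavefront set are those with $x \to 0$ relative to $|\xi| \to \infty$, i.e. the horizontal directions $\{0\}\times\RR^d$. In the self-contained approach, the main obstacle is the bookkeeping for the lower-order terms of the classical symbol $a$: one must verify that enlarging $a_0$ to a full symbol $a$ with the same angular support (to leading order) still yields $Au \in \schwartz$, which requires controlling the asymptotic sum and checking that the non-classical remainder is smoothing in the isotropic sense — routine but slightly delicate. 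The geometric input that $x_0 \neq 0$ guarantees a conic neighbourhood disjoint (at infinity) from $\{x : x \in \supp u\} \times \RR^d$ is what makes everything work, and that is the step I would state carefully.
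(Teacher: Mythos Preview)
The paper does not actually prove this lemma; it simply records that the statement is a special case of Proposition~2.6 in H\"ormander~\cite{Hormander91} and moves on. Your proposal is therefore strictly more detailed than the paper's own treatment. The citation route you mention as an ``alternative'' is in fact all the paper does, while your main line---constructing $A\in\calccl^0$ elliptic at $(x_0,\xi_0)$ whose symbol is supported in a cone where $|x|\geq c|(x,\xi)|$, and then showing $Au\in\schwartz$ via a kernel estimate---is a correct self-contained argument. The one place to be careful is the step ``$Au\in\schwartz$'': multiplication by a spatial cut-off $\chi\in\CcI$ is \emph{not} an isotropic pseudodifferential operator, so you cannot simply compose in the calculus; instead you should argue directly on the Schwartz kernel $K_A(x,y)=(2\pi)^{-d}\int e^{i(x-y)\xi}a((x+y)/2,\xi)\,d\xi$, observing that for $y\in\supp u$ and bounded $x$ the support condition on $a$ makes the $\xi$-integral compactly supported, while for large $|x|$ integration by parts in $\xi$ gives arbitrary decay. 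With that adjustment your sketch is complete, and what it buys over the bare citation is an argument that stays entirely inside the framework the paper has already set up.
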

It is well-known that $\WFiso(u) \cap \{0\} \times \RR^d = \emptyset$ implies that $u \in \CI$ (\cf \cites{Hormander91,DGW17}).
We refine this result slightly:\footnote{This result was stated in \cite{Nakamura05} for the homogeneous wavefront set}
\begin{lemma}\label{lem:wfiso-smooth}
    Let $u \in \schwartz'$ and $\Gamma \subset \RR^d\setminus\{0\}$ an open cone. If $\WFiso(u) \cap \{0\} \times \Gamma = \emptyset$ then
    $\WF(u) \cap \RR^d \times \Gamma = \emptyset.$
\end{lemma}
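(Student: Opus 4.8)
The plan is to relate the isotropic wavefront set to the classical (conic) wavefront set by means of a dyadic/conic localization in the base variable $x$, combined with a rescaling that turns the isotropic calculus into a semiclassical statement. First I would fix a point $(x_0,\xi_0) \in \RR^d \times \Gamma$ with $\xi_0 \neq 0$ and a bump function $\chi \in \CcI(\RR^d)$ equal to $1$ near $x_0$; the goal is to show $\chi u$ is smooth in the direction $\xi_0$, i.e. that $\widehat{\chi u}(\xi)$ decays rapidly in a small cone around $\xi_0$. Since $\chi$ is compactly supported, multiplication by $\chi$ is (up to smoothing) an operator in the isotropic calculus whose symbol, for $|(x,\xi)|$ large, is supported in $\{x \in \supp\chi\}$, hence near the axis $\{0\} \times \RR^d$ at infinity. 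Thus $\WFiso(\chi u) \subset \{0\} \times \RR^d$ by Lemma~\ref{lem:wfiso-compact}, and I claim the hypothesis $\WFiso(u) \cap \{0\}\times\Gamma = \emptyset$ propagates to $\WFiso(\chi u) \cap \{0\}\times\Gamma = \emptyset$: indeed one can choose $A \in \calccl^0$ elliptic on $\{0\}\times\Gamma'$ for a slightly smaller cone $\Gamma'$ with $Au \in \schwartz$, and then $A(\chi u) = \chi(Au) + [A,\chi]u$; the commutator is one order lower and, iterating, its contribution is Schwartz modulo terms again localized away from $\{0\}\times\Gamma'$.

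The heart of the argument is then the implication $\WFiso(v) \cap \{0\}\times\Gamma' = \emptyset \implies v$ is rapidly decaying in Fourier space in the cone $\Gamma'$, for $v = \chi u \in \E'$. Here I would use the standard characterization: there exists a classical isotropic symbol $a \in \symbcl^0$, elliptic (i.e. $a_0 \neq 0$) on a conic neighborhood of $\{0\}\times\Gamma'$ in $\RR^{2d}\setminus 0$, with $a^w(x,D) v \in \schwartz$. For $|\xi|$ large and $|x|$ bounded, $a_0(x,\xi)$ is comparable to $a_0(0,\xi/|\xi|)$ by homogeneity and continuity, so $a^w(x,D)$ is elliptic on $\supp\chi \times \{\xi : \xi/|\xi| \in \Gamma''\}$ for some cone $\Gamma'' \ni \xi_0$. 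A microlocal parametrix construction in the isotropic calculus (or equivalently in the Kohn–Nirenberg calculus after restricting to $|x| \le C$) then gives $B \in \calc^0$ with $BA = \chi_1 + R$ where $\chi_1 \in \calc^0$ has symbol $1$ near $\supp\chi \times \Gamma''$ at infinity and $R$ is smoothing; applying this to $v$ shows $\chi_1 v \in \schwartz$, which by a further standard cutoff argument forces $\widehat{v}$ to decay rapidly on the cone $\Gamma''$ around $\xi_0$. Since $v = \chi u$ and $\chi \equiv 1$ near $x_0$, this gives $(x_0,\xi_0) \notin \WF(u)$.

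The main obstacle I anticipate is bookkeeping the passage between the \emph{global} isotropic calculus (where ellipticity is a condition at infinity in all of $(x,\xi)$) and the \emph{local} statement about $\WF(u)$ (which only sees the $\xi$-direction and is insensitive to the $x$-behavior). The key technical point making this work is that for a compactly $x$-supported distribution $v$, the relevant part of phase space "at infinity" is exactly the cone around $\{0\}\times\RR^d$, so isotropic ellipticity there in the cone $\Gamma$ is \emph{equivalent} to Kohn–Nirenberg ellipticity in the cone $\RR^d \times \Gamma$ restricted to $\supp v$. One must check that the asymptotic expansions match up and that the remainder terms, which are only one order lower in the isotropic scale, can be absorbed by iteration — exactly as in the classical elliptic regularity argument — so that no loss occurs. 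I would also need to verify that "$\WFiso(u)=\emptyset \iff u \in \schwartz$" type statements localize correctly, but this is already available from the cited references and from Lemma~\ref{lem:wfiso-compact}.
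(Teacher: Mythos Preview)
Your plan is essentially correct, but it takes a considerably longer route than the paper's proof, and one step of your sketch is shaky as written.

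\textbf{Comparison with the paper.} The paper never localizes in $x$, never uses a commutator argument, and never builds a parametrix. Instead it works directly with $u$: choose an isotropic symbol $a_0\in\symbcl^0$ equal to $1$ on a conic neighborhood of $\{0\}\times\Gamma$ and with $\supp a_0\cap\WFiso(u)=\emptyset$, so that $a_0(x,D)u\in\schwartz$ and $u=(1-a_0(x,D))u+\schwartz$. Now take any Kohn--Nirenberg symbol $b\in S^0_{\cl}$ with $\supp b\subset K\times\Gamma$ for some compact $K$. The simple geometric observation is that for $x\in K$ and $\xi\in\Gamma$ with $|\xi|$ large one has $(x,\xi)/|(x,\xi)|$ close to $\{0\}\times\Gamma$, hence $a_0(x,\xi)=1$ there; so $\supp b\cap\supp(1-a_0)$ is compact in $(x,\xi)$ and the composition $b(x,D)(1-a_0(x,D))$ is smoothing. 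Thus $b(x,D)u\in\schwartz$ for every such $b$, which is exactly the statement $\WF(u)\cap\RR^d\times\Gamma=\emptyset$. What you call ``the key technical point'' (that compact-$x$, conic-$\xi$ regions live at infinity inside $\{0\}\times\Gamma$ in the isotropic compactification) is used \emph{once}, directly on the symbol supports, rather than being threaded through a localization, a commutator expansion, and a parametrix.

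\textbf{A minor gap.} Your commutator step ``$[A,\chi]$ is one order lower and, iterating, its contribution is Schwartz modulo terms localized away from $\{0\}\times\Gamma'$'' is not quite an argument: iterating the commutator lowers the isotropic order but never reaches $-\infty$ in finitely many steps, and you have not said why the remainder applied to $u\in\schwartz'$ is Schwartz. The clean fix is to skip the commutator entirely and invoke microlocality of $\calc^0$ (multiplication by $\chi\in\CcI$ lies in $\calc^0$, hence $\WFiso(\chi u)\subset\WFiso(u)$). With that replacement your argument goes through, but at that point you may as well drop the preliminary cutoff $\chi$ altogether and argue as the paper does.
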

\begin{proof}
    Let $a_0 \in \symbcl^0$ such that $a_0 = 1$ in conic neighborhood of $0 \times \Gamma$ and $\supp a_0 \cap \WFiso(u) = \emptyset$.
    By the properties of the isotropic wavefront set, we obtain
    \begin{align*}
        u &= a_0(x,D) u + (1 - a_0(x,D)) u\\
        &= (1- a_0(x,D))u + \schwartz(\RR^d)\\
        &= (2\pi)^{-d}\int e^{ix\xi} (1 - a_0(x,\xi)) \hat{u}(\xi) d\xi + \schwartz(\RR^d).
    \end{align*}
    Choose $b \in S_{\cl}^0$ with $\supp b \subset K \times \Gamma$, for some compact set $K \subset \RR^d$.
    There is an $R > 0$ such that $\{(x,\xi) \in \supp b\colon |\xi| > R\} \subset \supp a_0$.
    Therefore, the symbol of the composition $b(x,D) (1 - a_0(x,D))$ is compactly supported in $(x,\xi)$.
    This implies that $b(x,D) (1 - a_0(x,D)) : \schwartz' \to \schwartz$ and thus
    \[b(x,D) u = b(x,D) (1 - a_0(x,D))u + b(x,D)a_0(x,D) u \in \schwartz(\RR^d).\]
\end{proof}
Note that $u \in \CI$ does not imply that $\WFiso(u) \cap \{0\} \times \RR^d = \emptyset$.
The function $u : x\mapsto e^{ix^3/3}$ is smooth, but not
rapidly decaying
and one can show using the semiclassical description of isotropic wavefront set that $\WF(u) \subset \{0\} \times \RR^d$.
\subsection{SG-calculus}
The SG-calculus (also called scattering calculus for asymptotically Euclidean manifolds) differs from the isotropic calculus by the fact that
taking derivatives in $x$ does not affect the decay in $\xi$ and vice versa.
The SG-calculus is in a way the more natural way of
defining pseudo-differential operators on $\RR^d$, but it is not suited for second order operators differential such as the harmonic oscillator.
\begin{definition}
    Let $m_\psi,m_e$ be real numbers.
    The class $\symbsg^{m_\psi,m_e}(\RR^d)$ consists of functions $a \in \CI(\RR^{2d})$ such that for all multiindices $\alpha,\beta \in \NN^d$ the is an estimate
    \begin{align*}
        \abs{\pa_x^\alpha \pa_\xi^\beta a(x,\xi)} \lesssim_{\alpha,\beta} \ang{\xi}^{m_\psi - |\beta|} \ang{x}^{m_e - |\alpha|}.
    \end{align*}
\end{definition}
We define the corresponding class of SG-pseudodifferential operators:
\begin{align*}
    \calcsg^{m_\psi,m_e} = \left\{ a^w(x,D) \colon a\in\symbsg^{m_\psi,m_e}\right\}.
\end{align*}
The SG-calculus enjoys the following properities:
\begin{enumerate}[(i)]
\item
    $\calcsg = \bigcup_{m_\psi,m_e} \calcsg^{m_\psi,m_e}$ is a $*$-algebra.
\item
    Differential operators of the form 
    \[
        \sum_{\smallabs{\alpha}\leq m_e,\\\smallabs{\beta}\leq m_\psi}
        a_{\alpha,\beta}x^\alpha D^\beta
    \]
    lie in $\calcsg^{m_\psi,m_e}.$
\item
    There are two principal symbol maps $\sigma^\psi,\sigma^e$,
    \begin{align*}
        \sigma^\psi &: \calcsg^{m_\psi,m_e} \to \symbsg^{m_\psi,m_e}/\symbsg^{m_\psi-1,m_e},\\
        \sigma^e &: \calcsg^{m_\psi,m_e} \to \symbsg^{m_\psi,m_e}/\symbsg^{m_\psi,m_e-1},
    \end{align*}
    such that the following principal symbol sequences are exact:
    \begin{align*}
        0 \to \calcsg^{m_\psi-1,m_e} \to \calcsg^{m_\psi,m_e} \overset{\sigma^\psi}{\to} \symbsg^{m_\psi,m_e}/\symbsg^{m_\psi-1,m_e},\\
        0 \to \calcsg^{m_\psi,m_e-1} \to \calcsg^{m_\psi,m_e} \overset{\sigma^e}{\to} \symbsg^{m_\psi,m_e}/\symbsg^{m_\psi,m_e-1}.
    \end{align*}
    We note that for ellipticity one needs a third principal symbol, $\sigma^{\psi,e}$.
\item
    If $A\in\calcsg^{m_\psi,m_e},$ $B\in\calcsg^{m_\psi',m_e'}$, then
    \[[A,B]\in\calcsg^{m_\psi+m_\psi'-1, m_e+m_e'-1}\]
    and satisfies
    \begin{equation*}
        \sigma^\bullet_{m_\bullet+m_\bullet'-1}([A,B])
        =\frac{1}{i}\{\sigma^\bullet_{m_\bullet}(A), \sigma^\bullet_{m_\bullet'}(B)\},
    \end{equation*}
    for $\bullet \in {\psi,e}$ and
    with the Poisson bracket indicating the (well-defined) equivalence
    class of the Poisson bracket of representatives of the equivalence
    classes of each of the principal symbols.
\item
    Every $A\in\calcsg^{0,0}$ defines a continuous linear map on $L^2(\RR^d).$
\item
    The SG-Sobolev spaces, $\sobsg^{s_\psi,s_e}$
    are defined for $s_\psi,s_e \in \RR$ by
    \[
        f \in \sobsg^{s_\psi,s_e} \Longleftrightarrow \ang{x}^{s_e}\ang{D}^{s_\psi}f \in L^2(\RR^d).
    \]
    For all $m_\psi,m_e,s_\psi,s_e\in \RR$ and all
    $A\in\calcsg^{m_\psi,m_e}$,
    \[
        A: \sobsg^{s_\psi,s_e} \to\sobsg^{s_\psi-m_\psi,s_e - m_e}
    \]
    is continuous.
\item\label{it:limitsob}
    The scale of SG-Sobolev spaces satisfies
    \[\displaystyle\bigcap_{m_\psi,m_e} \sobsg^{m_\psi,m_e}=\mathcal{S}(\RR^d),\quad
    \bigcup_{m_\psi,m_e} \sobsg^{m_\psi,m_e}=\mathcal{S'}(\RR^d).\]
\item
    The classical wavefront set of $u \in \schwartz'$ is given by
    \[
        \WF u = \bigcap_{\substack{A \in \calcsg^{0,-\infty}\\Au \in \schwartz}} \Sigma_\psi(A),
    \]
    where $\Sigma_\psi$ is set of points $(x,\xi) \in \RR^d\times \RR^d\setminus \{0\}$ such that $\sigma^\psi(a)(x,\xi) = 0$.
\end{enumerate}
\section{The free propagator}
We start with reviewing the construction of a parametrix for the free propagator $U_0(t) = e^{-itH_0}$ in the FIO calculus of Helffer--Robert
(\cf \cite{Helffer84}*{Chapter 3}).
Let $T > 0$ such that there exists a short-time parametrix $\pu$ of $U_0$ until time $T$ and $\pu$ has the form
\begin{align*}
    \pu(t) = \int e^{i(\phi_2(t,x,\xi) - y\xi)} a(t,x,\xi) \,d\xi,
\end{align*}
where $a \in \C([0,T],\symbcl^0)$ and $\phi_2 \in \C([0,T],\CI(\RR^{2d}))$,
with the following properties for $t \in [0,T]$ and $|(x,\xi)| > 1$:
\begin{itemize}
    \item $\phi_2$ is homogeneous of degree $2$ in $(x,\xi)$,
    \item $\phi_2$ solves the eikonal equation
        \begin{equation*}
            \left\{\begin{aligned}
                \pa_t \phi_2(t) + p_2(x,\pa_x \phi_2) &= 0,\\
                \phi_2(0) &= x\xi,
            \end{aligned}\right.
        \end{equation*}
    \item $\det \pa_x\pa_\xi \phi_2(t) \not = 0$ for $t \in [0,T]$,
    \item $\exp(-t\hamvf_0)(x,\pa_x\phi_2) = (\pa_\xi\phi_2,\xi)$.
\end{itemize}
The short-time parametrix is constructed by solving the eikonal equation and transport equations for the homogeneous terms of the amplitude $a_j$.
The time $T > 0$ depends on the eikonal equation and the transport equation for $a_0$.
Using Borel summation and Duhamel's formula (\cf \cite{Helffer84}*{Proposition 3.1.1}), we obtain that
\begin{align*}
    \pu(t) - U_0(t) \in \CI([0,T],\mathcal{L}(\schwartz'(\RR^d),\schwartz(\RR^d))).
\end{align*}

Let $t_0 > 0$ and write $t_0 = \sum_{j=1}^N t_j$ such that $t_j \in (0,T)$.
Using the group property of $U_0$, we obtain for $u \in \schwartz'$,
\begin{align*}
    U_0(t_0) &= U_0(t_1)\dotsm U_0(t_N)\\
    &\in \pu(t_1)\dotsm\pu(t_N) + \mathcal{L}(\schwartz'(\RR^d),\schwartz(\RR^d)).
\end{align*}
The parametrix $\pu(t_j)$ has kernel
\begin{align*}
    \pu(t_j,z_{j-1},z_j) = \int e^{i(\phi_2(t,z_{j-1},\xi_j) - z_j\xi)} a(t,z_{j-1},\xi_j) \,d\xi_j.
\end{align*}
We write $x = z_0$ and $y = z_N$,
then a parametrix for $t = t_0$ is given by
\begin{align*}
    \pu(t_0) = \int e^{\phi(x,y,\theta)} a(x,\theta) d\theta,
\end{align*}
where
\begin{align*}
    \theta &= (z_1,\dotsc,z_{N-1},\xi_1,\dotsc,\xi_N),\\
    \phi(z_0,z_N,\theta) &= \sum_{j=1}^N \phi_2(t_j,z_{j-1},\xi_j) - z_j\xi_j,\\
    a(s,z_0,\theta) &= \prod_{j=1}^{N-1} a(t_j,z_{j-1}, \xi_j) \cdot a(t_N,z_{N-1},\theta).
\end{align*}
One advantage of the isotropic calculus is that the new phase function $\phi = \phi(x,y,\theta)$ is homogeneous of degree $2$ in all variables for
$|(x,y,\theta)|$ large enough.

\subsection{Classical flow and Lagrangian submanifolds}
Given an Hamiltonian function $p_2$ we associate the flow $t \mapsto \exp(t\hamvf_0)$ and define the set
\begin{align*}
    \Lambda = \left\{(x,y,\xi,-\eta) \colon \exp(t_0\hamvf_0)(y,\eta) = (x,\xi)\right\}.
\end{align*}
We note that $\Lambda$ is a Lagrangian submanifold of $\RR^{2d}$.

In this section, we always assume that $t = t_0$ and omit $t$ from the notation, for instance
we write $\exp(-t_0\hamvf_0)(x,\xi) = (y(x,\xi),\eta(x,\xi))$.

As mentioned in the introduction, we work under the assumption that $\Lambda$ and $0 \times \RR^{2d}$ intersect cleanly, that means that
$\Lambda_0 = \Lambda \cap 0\times \RR^{2d}$ is a smooth manifold and
\begin{align*}
    T_w\Lambda_0 = T_w\Lambda \cap T_w (0\times \RR^{2d}),\quad w \in \Lambda_0.
\end{align*}
At a point $w_0 = (0,0,\xi_0,\eta_0) \in \Lambda_0$ the tangent space is given by
\begin{align*}
    T_{w_0}\Lambda_0 = \left\{ (0,0,\xi,\eta) \in \RR^{4d} \colon \pa_\xi y(0,\xi_0) \xi = 0, \pa_\xi \eta(0,\xi_0) \xi = \eta\right\}.
\end{align*}
The dimension of $\Lambda_0$ is given by a function $(x,\xi) \mapsto e(\xi,\eta)$, which is a locally constant integer with $e \leq d$.
Further,
\begin{align*}
    e(\xi_0,\eta_0) &= \dim T_{w_0}\Lambda_0\\
    &= d - \rk \pa_\xi y(0,\xi_0)\\
    &= \rk \pa_\xi \eta(0,\xi_0).
\end{align*}

The critical set of $\phi(x,y,\theta)$ is defined by
\begin{align*}
    C_\phi = \{(x,y,\theta) \colon \partial_\theta \phi(x,y,\theta) = 0\},
\end{align*}
with
\begin{align*}
    \partial_\theta \phi = 
    \begin{pmatrix}
        \partial_z \phi_2(t_j, z_j, \xi_{j+1}) - \xi_j\\
        \partial_\xi \phi_2(t_j,z_{j-1},\xi_j) - z_j
    \end{pmatrix}.
\end{align*}
The phase function is non-degenerate and (\cf \cite{Duistermaat})
as a direct consequence of the regular value theorem, we see that
\begin{lemma}
    The set $C_\phi$ is a manifold of dimension $2d$.
\end{lemma}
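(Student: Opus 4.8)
The plan is to show that $\phi$ is a \emph{non-degenerate} phase function in the fiber variables $\theta = (z_1,\dots,z_{N-1},\xi_1,\dots,\xi_N)$ and then to read off the dimension from the preimage (regular value) theorem. Recall that non-degeneracy of $\phi$ means exactly that $0$ is a regular value of the map
\begin{align*}
    \RR^{(2N+1)d} \ni (x,y,\theta) \longmapsto \partial_\theta\phi(x,y,\theta) \in \RR^{(2N-1)d},
\end{align*}
equivalently that the differentials $d(\partial_{\theta_k}\phi)$, $1 \le k \le (2N-1)d$, are linearly independent at every point of $C_\phi$. Granting this, the preimage theorem gives at once that $C_\phi$ is a smooth submanifold of codimension $(2N-1)d$, hence of dimension $(2N+1)d - (2N-1)d = 2d$, which is the assertion.

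To verify the non-degeneracy I would solve the critical equations explicitly. Writing $z_0 = x$, $z_N = y$, the condition $\partial_\theta\phi = 0$ is the system
\begin{align*}
    z_j &= \partial_\xi\phi_2(t_j,z_{j-1},\xi_j), \quad j = 1,\dots,N,\\
    \xi_j &= \partial_x\phi_2(t_{j+1},z_j,\xi_{j+1}), \quad j = 1,\dots,N-1.
\end{align*}
Near a fixed point of $C_\phi$ I would treat $(x,\xi_1)$ as free parameters and eliminate the remaining $(2N-1)d$ coordinates one step at a time: the first equation with $j=1$ gives $z_1$ explicitly from $(x,\xi_1)$; the second equation with $j=1$, namely $\xi_1 = \partial_x\phi_2(t_2,z_1,\xi_2)$, can be solved for $\xi_2$ by the implicit function theorem because $\det\partial_x\partial_\xi\phi_2(t_2)\neq 0$; the first equation with $j=2$ then gives $z_2$; and so on, alternately reading off $z_j$ from its explicit formula and solving for $\xi_{j+1}$ by the implicit function theorem (using $\det\partial_x\partial_\xi\phi_2(t_{j+1})\neq 0$), until $z_1,\dots,z_{N-1}$, $\xi_2,\dots,\xi_N$, and finally $y = z_N$, have all been expressed as smooth functions of $(x,\xi_1)$. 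This exhibits $C_\phi$, near the chosen point, as the graph of a smooth map defined on an open subset of $\RR^{2d}$ — which is precisely the non-degeneracy of $\phi$, and simultaneously displays $C_\phi$ as a $2d$-dimensional manifold. Conceptually, the same conclusion follows because each $\pu(t_j)$ is a Fourier integral operator associated with the graph of the symplectomorphism $\exp(t_j\hamvf_0)$, so that $\phi$ parametrizes the graph of $\exp(t_0\hamvf_0)$; graphs of symplectomorphisms compose transversally, whence $\phi$ is automatically a non-degenerate phase function.

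The one point to be handled with care is the bookkeeping in this elimination: at each step one must check that the coordinate being removed is genuinely determined by an invertible Jacobian, which is where the hypotheses $\det\partial_x\partial_\xi\phi_2(t_j)\neq 0$ enter (for bounded $(z_{j-1},\xi_j)$ these hold because each $t_j < T$ is small, so $\phi_2(t_j,\cdot,\cdot)$ differs little from $z_{j-1}\xi_j$ and the relevant Jacobian is close to the identity). Everything after the non-degeneracy is a direct invocation of the regular value theorem.
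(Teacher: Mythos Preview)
Your proposal is correct and follows exactly the paper's approach: non-degeneracy of $\phi$ plus the regular value theorem, with the dimension count $(2N+1)d-(2N-1)d=2d$. The paper merely asserts non-degeneracy with a reference to Duistermaat, whereas you supply the explicit step-by-step elimination (using $\det\partial_x\partial_\xi\phi_2(t_j)\neq 0$) that actually verifies it; this is a welcome elaboration rather than a different route.
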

We have a diffeomorphism
\begin{align*}
    \lambda_{\phi} : C_\phi &\to \Lambda,\\
    (x,y,\theta) &\mapsto (x,y,\pa_x\phi(x,y,\theta),\pa_y\phi(x,y,\theta)).
\end{align*}
Since cleanness is preserved under diffeomorphisms the manifolds $C_\phi$ and $0 \times \RR^N$ intersect cleanly and we
denote the intersection by $C_{\phi,0}$ and it is given by
\begin{align*}
    C_{\phi,0} = \left\{(0,0,\theta) \colon \pa_\theta\phi(0,0,\theta) = 0\right\},\\
    T_{(0,0,\theta_0)}C_{\phi,0} = \left\{ (0,0,\delta\theta) \colon \pa_\theta \pa_\theta \phi(0,0,\theta_0) \delta \theta = 0\right\},
\end{align*}
so we conclude that for $(0,0,\theta_0) \in C_{\phi,0}$ and $(0,0,\xi_0,\eta_0) = \lambda_\phi(0,0,\theta_0)$,
\begin{align*}
    N - \rk \pa_{\theta,\theta} \phi(0,0,\theta_0) = \rk\pa_\xi\eta(0,\xi_0) = e(\xi_0,\eta_0).
\end{align*}

The next proposition is implicit in the work of Helffer--Robert:
\begin{proposition}\label{prop:egorov}
    Let $a \in \symb^m$ and $t \in \RR$ arbitrary. Then
    \begin{align*}
        B = e^{it_0H_0} a(x,D) e^{-it_0H_0}
    \end{align*}
    is an isotropic pseudodifferential operator, $B \in \calc^m$ and its principal symbol is given by
    \begin{align*}
        \sigma^m(B)(y,\eta) = \sigma^m(a)(\exp(t_0\hamvf_0)(y,\eta)).
    \end{align*}
\end{proposition}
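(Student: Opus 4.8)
The plan is to prove this Egorov-type theorem by exploiting the parametrix $\pu(t_0)$ for $U_0(t_0)$ constructed above, whose Schwartz kernel is an oscillatory integral with phase $\phi(x,y,\theta)$ homogeneous of degree $2$ and amplitude $a \in \symbcl^0$. Since $\pu(t_0) - U_0(t_0)$ is smoothing with values in $\schwartz$, conjugating $a(x,D)$ by either one differs by an operator in $\mathcal{L}(\schwartz',\schwartz) \subset \calc^{-\infty}$, so it suffices to analyze $B' = \pu(-t_0)\, a(x,D)\, \pu(t_0)$ (using that the parametrix for $U_0(-t_0) = U_0(t_0)^{-1}$ is, up to smoothing, $\pu(t_0)^{-1}$, which by the isotropic calculus is again an oscillatory integral with the backward phase). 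The composite is then a triple oscillatory integral in $(\theta,\xi,\theta')$.

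The key steps, in order: First, I would set up the composition $\pu(-t_0)\, a(x,D)\, \pu(t_0)$ explicitly as an oscillatory integral and observe that, since both $\phi_2$ terms are generated by the same classical flow run in opposite directions, the total phase has a large critical manifold. Second, I would apply the method of stationary phase in the $(\theta,\xi,\theta')$ variables; because the relevant Lagrangian $\Lambda$ is the graph of the canonical transformation $\exp(t_0 \hamvf_0)$, the stationary phase is genuinely nondegenerate in the transverse directions (no clean-intersection subtlety arises here, because we are composing $U_0(t_0)$ with its inverse, so the composite Lagrangian is the diagonal), and the stationary points are parametrized by $(y,\eta)$ with the critical value of the phase vanishing — the composition of a canonical graph with its inverse gives the identity. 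Third, reading off the leading amplitude from the stationary phase expansion and using the transport equation satisfied by $a_0$ along the flow (which makes the Jacobian factors from stationary phase cancel against the amplitude's evolution), I would identify the principal symbol of $B$ as $\sigma^m(a) \circ \exp(t_0 \hamvf_0)$. Finally I would note that lower-order terms in the amplitude and in the stationary phase expansion contribute only to $\calc^{m-1}$ and below, and that the result extends from short times to arbitrary $t_0$ by the group law $U_0(t_0) = U_0(t_1)\dotsm U_0(t_N)$ together with the cocycle property of the flow, so the composition remains in $\calc^m$ throughout.

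The main obstacle I anticipate is controlling the stationary phase expansion of a \emph{multiple} oscillatory integral whose phase is only homogeneous (not polynomial) and only for $|(x,y,\theta)|$ large, so one must separately handle a compactly supported region (where everything is smoothing or handled by ordinary pseudodifferential composition) and the conic region at infinity, and in the latter one needs the refined stationary phase lemma with nonlinear dependence on the large parameter that Section 5 is advertised to provide. Verifying that the Hessian of the total phase is nondegenerate on the transverse space — equivalently that $\det \pa_x \pa_\xi \phi_2 \neq 0$ propagates correctly through the composition — and that the resulting half-density/Maslov factors are trivial because we compose a transformation with its inverse, is the technical heart; once that is in place, the identification of the principal symbol via the transport equation is essentially bookkeeping. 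Alternatively, and perhaps more cleanly, one can avoid multiple integrals entirely by differentiating $B(t) = e^{itH_0} a(x,D) e^{-itH_0}$ in $t$, obtaining the Heisenberg equation $\dot B(t) = i[H_0, B(t)]$, checking that $i[H_0, \cdot]$ preserves $\calc^m$ (since the commutator of an order-$2$ and an order-$m$ isotropic operator has order $m$, as noted in Section 2) and acts on principal symbols by $\{p_2, \cdot\}$, and then solving this transport equation at the symbol level to get $\sigma^m(B(t)) = \sigma^m(a) \circ \exp(t\hamvf_0)$; the obstacle there is promoting the formal symbol solution to an actual operator identity, which requires an Duhamel/iteration argument controlling the remainders in the isotropic Sobolev scale — but this is exactly the kind of estimate already established in \cite{Helffer84} and \cite{DGW17}, so I would cite it rather than redo it.
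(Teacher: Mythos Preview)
Your plan is correct, and your first approach is in the same spirit as the paper's, but the paper's proof is much shorter: it simply invokes Corollary~2.10.7 of Helffer~\cite{Helffer84} (the Egorov theorem for the isotropic FIO calculus), which already guarantees that $B \in \calc^m$ with principal symbol determined by the canonical relation of the parametrix, namely
\[
\sigma^m(B)(y,-\pa_y\phi(x,y,\theta)) = \sigma^m(a)(x,\pa_x\phi(x,y,\theta)) \quad\text{on } C_\phi,
\]
and then uses the diffeomorphism $\lambda_\phi : C_\phi \to \Lambda$ to rewrite this as $\sigma^m(B)(y,\eta) = \sigma^m(a)(\exp(t_0\hamvf_0)(y,\eta))$. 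In other words, the paper outsources the entire stationary-phase composition you sketch to the cited result; your first outline is essentially a reconstruction of what that corollary contains, including the observation that the composite Lagrangian is the diagonal so no clean-intersection excess arises. Your alternative Heisenberg-equation route ($\dot B(t) = i[H_0,B(t)]$, with $i[H_0,\cdot]$ preserving $\calc^m$ and acting as $\{p_2,\cdot\}$ on principal symbols) is a genuinely different and self-contained argument; it trades the FIO machinery for an ODE on symbols plus a Duhamel remainder estimate, and has the advantage of not needing the long-time parametrix at all, though one must still justify the asymptotic symbol construction uniformly in $t$.
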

\begin{proof}
    By Corollary 2.10.7 from \cite{Helffer84} and the parametrix construction, the operator $B$ is an isotropic pseudodifferential operator with principal symbol
    \begin{align*}
        \sigma^m(B)(y,-\pa_y\phi(x,y,\theta)) = \sigma^m(A)(x,\pa_x\phi(x,y,\theta)),
    \end{align*}
    for $(x,y,\theta) \in C_\phi$. Using the diffeomorphism $\lambda_\phi$ and the Definition of $\Lambda$, we see that this is nothing but
    \begin{align*}
        \sigma^m(B)(y,\eta) = \sigma^m(A)(\exp(t_0\hamvf_0)(y,\eta))
    \end{align*}
    as claimed.
\end{proof}
\begin{proposition}\label{prop:wfiso-free}
    Let $u \in \schwartz'(\RR^d)$. The isotropic wavefront set is given by
    \begin{align*}
        \WFiso(U_0(t_0)u) = \exp(t_0\hamvf_0) \WFiso(u).
    \end{align*}
\end{proposition}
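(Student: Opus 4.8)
The plan is to run a standard Egorov argument, combining Proposition~\ref{prop:egorov} with the description of $\WFiso$ as an intersection of characteristic sets. Recall first that $(x_0,\xi_0)\notin\WFiso(u)$ precisely when there exists $A=a^w(x,D)\in\calccl^0$ with $Au\in\schwartz$ and $a_0(x_0,\xi_0)\neq 0$, i.e.\ $(x_0,\xi_0)\notin\Sigma_0(A)$. Note also that, since $p_2$ is homogeneous of degree $2$, its Hamiltonian flow $\exp(t_0\hamvf_0)$ is homogeneous of degree $1$ and is a diffeomorphism of $\RR^{2d}\setminus\{0\}$; hence $\exp(t_0\hamvf_0)\WFiso(u)$ is again a conic subset of $\RR^{2d}\setminus\{0\}$, and the clean-intersection hypothesis is not needed for this proposition.

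First I would establish the inclusion $\WFiso(U_0(t_0)u)\subset\exp(t_0\hamvf_0)\WFiso(u)$. Fix $(x_0,\xi_0)\notin\WFiso(u)$ and choose $A$ as above. Set $B=U_0(t_0)AU_0(-t_0)=e^{-it_0H_0}Ae^{it_0H_0}$, so that $B\,U_0(t_0)=U_0(t_0)A$. By Proposition~\ref{prop:egorov} (applied with $-t_0$ in place of $t_0$), $B$ is again a classical isotropic operator of order $0$ with principal symbol $b_0=a_0\circ\exp(-t_0\hamvf_0)$. Since $U_0(t_0)$ maps $\schwartz$ to $\schwartz$, we get $B\,U_0(t_0)u=U_0(t_0)(Au)\in\schwartz$, whence $\WFiso(U_0(t_0)u)\subset\Sigma_0(B)$. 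On the other hand $b_0\big(\exp(t_0\hamvf_0)(x_0,\xi_0)\big)=a_0(x_0,\xi_0)\neq 0$, so $\exp(t_0\hamvf_0)(x_0,\xi_0)\notin\Sigma_0(B)$ and therefore $\exp(t_0\hamvf_0)(x_0,\xi_0)\notin\WFiso(U_0(t_0)u)$. As $\exp(t_0\hamvf_0)$ is bijective on $\RR^{2d}\setminus\{0\}$, taking complements gives exactly the claimed inclusion.

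For the reverse inclusion I would apply the inclusion just proved with $u$ replaced by $U_0(t_0)u$ and $t_0$ replaced by $-t_0$, using the group law $U_0(-t_0)U_0(t_0)=\mathrm{id}$: this yields $\WFiso(u)\subset\exp(-t_0\hamvf_0)\WFiso(U_0(t_0)u)$, and applying the diffeomorphism $\exp(t_0\hamvf_0)$ gives $\exp(t_0\hamvf_0)\WFiso(u)\subset\WFiso(U_0(t_0)u)$, which together with the first part proves equality.

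The only point that requires care is that the conjugated operator $B$ lies in the class used to define $\WFiso$, that is, that it is \emph{classical} of order $0$ with the stated homogeneous principal symbol; this is precisely the content of Proposition~\ref{prop:egorov}, once one observes that conjugation by the Helffer--Robert parametrix preserves classicality of the amplitude. Everything else is bookkeeping with the Hamiltonian flow and the fact that $U_0(\pm t_0)$ are isomorphisms of $\schwartz$ (and of $\schwartz'$).
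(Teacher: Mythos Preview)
Your argument is correct and is essentially the same as the paper's: both conjugate a test operator through $U_0(\pm t_0)$ via Proposition~\ref{prop:egorov} and then invoke time-reversal for the remaining inclusion. The only cosmetic difference is the direction---the paper starts from a point outside $\WFiso(U_0(t_0)u)$ and pulls back by $\exp(-t_0\hamvf_0)$, whereas you start from a point outside $\WFiso(u)$ and push forward; your explicit remark that the conjugated operator must remain \emph{classical} (so that it can be used to test $\WFiso$) is a point the paper leaves implicit.
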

\begin{proof}
    It suffices to prove that $\WFiso(U_0(t_0)u) \supset \exp(t_0\hamvf_0) \WFiso(u)$, equality follows from time-reversal.

    Let $(x_0,\xi_0) \not \in \WFiso(U_0(t_0)u)$. Then there is a $Q \in \calccl^0$ such that $\sigma^0(Q)(x_0,\xi_0) = 1$ and
    $QU_0(t_0)u \in \schwartz$. This implies by Proposition~\ref{prop:egorov}
    \begin{align*}
        Pu &= U_0(-t_0) Q U_0(t_0) u \in \schwartz,
    \end{align*}
    and $\sigma^0(P)(y,\eta) = \sigma^0(Q)(\exp(t_0\hamvf_0)(y,\eta))$. Set $(y_0,\eta_0) = \exp(-t_0\hamvf_0)(x_0,\xi_0)$.
    Then $\sigma^0(P)(y_0,\eta_0) = 1$ and therefore $(y_0,\eta_0) \not \in \WFiso(u)$.
\end{proof}

\subsection{Recurrence of singularities}
Now, we investigate the recurrence of classical singularities, which is more delicate.
We define the reduced phase function $\phired$ by
\begin{align*}
    \phired(t_0,x,y,\eta) = x \Xi_{t_0}(\eta) - y\eta.
\end{align*}
\begin{proposition}\label{prop:parametrix-free}
    The propagator at time $t=t_0$ is locally given by
    \begin{align*}
        U_0(t_0,x,y) = \int_{\Gamma} e^{i\phired(t_0,x,y,\eta)} \tilde{a}(t_0,x,y,\eta) d\eta
    \end{align*}
    modulo a smoothing operator and $\tilde{a} \in S^0$ is a local Kohn-Nirenberg symbol.
\end{proposition}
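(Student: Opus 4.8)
The plan is to start from the iterated-parametrix representation
\[
    \pu(t_0) = \int e^{i\phi(x,y,\theta)} a(x,\theta)\, d\theta, \qquad \theta = (z_1,\dots,z_{N-1},\xi_1,\dots,\xi_N),
\]
which is known modulo a smoothing operator, and reduce the number of phase variables by applying stationary phase in those $\theta$-directions that are genuinely nondegenerate. The point is that the $z_j$-variables, $j = 1,\dots,N-1$, enter $\phi$ through the equations $\pa_{z_j}\phi = \pa_z\phi_2(t_j,z_j,\xi_{j+1}) - \xi_j = 0$, and by the property $\det \pa_x\pa_\xi\phi_2(t_j) \ne 0$ on the conic set $|(x,\xi)| > 1$ the corresponding block of the Hessian $\pa_{z,z}\phi$ (or rather the mixed block coupling $z_j$ to $\xi_j,\xi_{j+1}$) is invertible there. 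So away from a compact set in phase space one can solve $\pa_{z_j}\phi = 0$ and $\pa_{\xi_j}\phi = 0$ for the intermediate variables and eliminate all of $\theta$ except one fiber variable, keeping $\eta = \xi_N$ (or a suitable relabeling) as the surviving integration variable of dimension $d$.

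First I would make the cutoff structure explicit: write $a = a\chi + a(1-\chi)$ where $\chi$ is a conic cutoff to $|(x,y,\theta)| > 1$; the piece $a(1-\chi)$ has compactly supported amplitude and contributes a smoothing operator, so it is harmless. On the support of $a\chi$ the phase $\phi$ is homogeneous of degree $2$ in $(x,y,\theta)$, and the critical manifold computations from the previous subsection apply. Second, I would invoke the (nonlinear) stationary phase lemma — the refined version announced in Section 5 of the paper, where the phase need not be linear in the large parameter — to carry out the Gaussian integral in the nondegenerate $\theta'$-directions (all of $\theta$ except the retained $d$-dimensional fiber). This produces a new amplitude $\tilde a$ which is again a classical symbol of order $0$: the order is preserved because each integration in a $2$-dimensional nondegenerate pair contributes a factor that is homogeneous of degree $0$ in the surviving large variable, the stationary values being homogeneous of degree $1$; and the Maslov/Morse-index factors are constants absorbed into $\tilde a$. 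Third, I would identify the critical value of the phase: at the stationary point the relations $\exp(-t_j\hamvf_0)(z_{j-1},\pa_z\phi_2(t_j,z_{j-1},\xi_j)) = (\pa_\xi\phi_2(t_j,z_{j-1},\xi_j),\xi_j)$ compose along the flow, so the reduced phase equals $\phi$ evaluated along the full bicharacteristic from $(y,\eta)$ to $(x,\xi)$, which by the very definition of $\Xi_{t_0}$ and $\Gamma$ — recall $\Gamma = \Gamma_{t_0}$ parametrizes those $\eta$ with $\exp(t_0\hamvf_0)(0,\eta) \in 0\times\RR^d$ — collapses to $x\,\Xi_{t_0}(\eta) - y\eta = \phired(t_0,x,y,\eta)$. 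Finally, since $\Xi_{t_0}$ is homogeneous of degree one, $\phired$ is homogeneous of degree one in $\eta$, so after this reduction the oscillatory integral has a phase linear-at-infinity in the single fiber variable $\eta$, and $\tilde a \in S^0$ is a genuine Kohn--Nirenberg symbol; rewriting the Weyl-quantized local kernel in Kohn--Nirenberg form (a left-reduction, which changes the amplitude only by lower-order terms) completes the proof.

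The main obstacle is the stationary phase step under the clean-intersection (rather than transversal) hypothesis. The Hessian $\pa_{\theta,\theta}\phi$ is invertible only in the directions I want to eliminate; along $C_{\phi,0}$ it has a kernel of dimension $e(\xi_0,\eta_0)$, which is exactly the defect measured in the previous subsection. So one cannot simply apply stationary phase in all of $\theta$: one must split $\theta$ into the nondegenerate block (which I integrate out) and a complementary $d$-dimensional block carrying the retained variable, and verify that on the support of the amplitude — after the conic cutoff $|(x,y,\theta)|>1$ — the nondegenerate block really is uniformly nondegenerate with symbol-type estimates on the inverse, so that the stationary phase expansion is valid with remainders of the expected order and the resulting $\tilde a$ has the claimed symbol bounds. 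Controlling these estimates globally in the large variable $\eta$, and checking that the degenerate directions are precisely the ones absorbed into the $\eta$-integration via the parametrization by $\Gamma$ and $\Xi_{t_0}$, is where the real work lies; the composition of bicharacteristics and the identification of the critical value are then essentially bookkeeping with the eikonal equation.
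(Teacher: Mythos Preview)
Your outline has a genuine gap at the step where you identify the reduced phase.  If you integrate out the intermediate variables $(z_1,\dots,z_{N-1},\xi_1,\dots,\xi_{N-1})$ keeping $\eta=\xi_N$, the critical value you obtain is the generating function of the time-$t_0$ flow, which is homogeneous of degree~$2$ in $(x,y,\eta)$ jointly, not of degree~$1$ in $\eta$.  It does \emph{not} collapse to $x\,\Xi_{t_0}(\eta)-y\eta$: even for $\eta\in\Gamma_{t_0}$ there remain terms quadratic in $(x,y)$ (degree~$0$ in $\eta$) and, when $e<d$, also terms quadratic in $\eta$ coming from $\phi(0,0,\theta)$ along directions transverse to $C_{\phi,0}$.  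So the assertion that the oscillatory integral emerges with a phase that is ``linear-at-infinity in $\eta$'' is not justified by your argument.  A related issue is the dimension count: the kernel of $\partial_{\theta\theta}\phi(0,0,\theta)$ has dimension $e\le d$, so the surviving fiber variable must be $e$-dimensional (parametrizing $\Gamma_{t_0}$), not $d$-dimensional; the block $\xi_N$ need not span or contain that kernel.

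The paper's approach differs in an essential way.  One first Taylor-expands $\phi(x,y,\theta)$ in $(x,y)$ around the origin, writing $\phi=\phi_0(\theta)+\psi(x,y,\theta)+f(x,y,\theta)$ with $\phi_0(\theta)=\phi(0,0,\theta)$ homogeneous of degree~$2$ in $\theta$, $\psi$ linear in $(x,y)$ and homogeneous of degree~$1$ in $\theta$, and $f$ the quadratic remainder in $(x,y)$.  After a conic localization near a connected component of $C_{\phi,0}$ one splits $\theta=(\theta',\theta'')\in\RR^{e}\times\RR^{N-e}$ so that $\partial_{\theta''\theta''}\phi_0$ is nondegenerate, straightens $C_{\phi,0}$ to $\{\theta''=0\}$, and reduces $\phi_0$ to a quadratic form $Q(\theta'')$.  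The $\theta''$-integral then has phase $\lambda^2 Q+\lambda(\psi-\psi|_{\theta''=0})+f$ under the scaling $\theta\mapsto\lambda\theta$, i.e.\ a phase inhomogeneous in the large parameter; this is exactly where the refined stationary phase of Section~5 is used, to show the resulting amplitude obeys Kohn--Nirenberg bounds.  The surviving phase is $\psi(x,y,\theta',0)$, and it is this \emph{linear} term --- not the full critical value of $\phi$ --- that is identified with $\phired$ via $\lambda_\phi$.  Your sketch invokes Section~5 but not for this purpose, and without the Taylor splitting in $(x,y)$ there is no mechanism to strip off the degree-$2$ pieces that obstruct the identification with $\phired$.
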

To prove this proposition we will first split the parametrix of $U_0(t_0)$ into a sum of oscillatory integrals supported near connected components
of the critical set $C_{\phi,0}$. Then we will reduce the number of fiber-variables similarly as in the case of Fourier integral operators on compact manifolds (\cf Hörmander~\cite{Hormander4}). In the last step, we show that the resulting amplitude satisfies the Kohn-Nirenberg estimates.
\begin{proof}
    We write
    \begin{align*}
        \phi(x,y,\theta) = \phi(0,0,\theta) + x\pa_x\phi(0,0,\theta) + y\pa_y\phi(0,0,\theta)  + \sum_{|\alpha|=2}(x,y)^\alpha f_\alpha(x, y, \theta)
    \end{align*}
    for some smooth functions $f_\alpha$ and set $f(x,y,\theta) = \sum_{|\alpha|=2}(x,y)^\alpha f_\alpha(x, y, \theta)$.
    We set $\psi(x,y,\theta) = x\pa_x\phi(0,0,\theta) + y\pa_y\phi(0,0,\theta)$ and $\phi_0(\theta) = \phi(0,0,\theta).$
    
    By choosing a function $\chi \in \CcI(\RR_+)$ such that $\chi(r) = 1$ for $r < R$ for some $R > 0$,
    we may assume that the phase $\phi(0,0,\theta)$ is homogeneous on $\RR^N$.
    In fact, the operator with kernel
    \begin{align*}
        \int_{\RR^N} e^{i\phi(x,y,\theta)} \chi(|\theta|) a(x,\theta) d\theta
    \end{align*}
    is regularizing, so may replace in the following $a(x,\theta)$ by $(1-\chi(|\theta|) )a(x,\theta)$.
    The set $C_{\phi,0}$ is conic and we can choose a conic partition of unity $\{\chi_j\}$ such that
    $C_{\phi,0} \cap \supp \chi_j$ is a connected manifold of dimension $N-e_j$.
    From now on we restrict our considerations to one $\chi_j$.

    After a linear transformation, we may assume $(\theta',\theta'') \in \RR^{e_j}\times \RR^{N-e_j}$ on $\supp \chi_j$ such that
    \begin{align*}
        \rk \pa_{\theta''\theta''}\phi_0(\theta) = N - e_j.
    \end{align*}
    Homogeneity of $\phi_0(\theta)$ implies that $\phi_0(\theta) = 0$ on $C_{\phi,0}$.
    Using the implicit function theorem to the equation $\pa_{\theta''}\phi_0(\theta) = 0$,
    we obtain a smooth map $g : \RR^{e_j} \to \RR^{N-e_j}$ that is homogeneous of degree $1$ outside a compact set
    such that
    \begin{align*}
        (\theta',\theta'') \in C_{\phi,0} \text{ if and only if } \theta'' = g(\theta').
    \end{align*}
    We introduce new coordinates
    \begin{align*}
        (\vartheta',\vartheta'') = (\theta',\theta'' - g(\theta'))
    \end{align*}
    and the phase function
    \begin{align*}
        \varphi_0(\vartheta) = \phi_0(\theta).
    \end{align*}
    Then $(\vartheta',\vartheta'') \in C_{\phi,0}$ if and only if $\vartheta'' = 0$. So, we have that
    \begin{align*}
        C_{\varphi_0} = \{\pa_\vartheta \varphi_0 = 0\} &= C_{\phi,0}.
    \end{align*}
    There exists a quadratic form $Q = Q(\vartheta'')$ with the same signature as $\pa_{\vartheta''\vartheta''}\varphi_0$ such that
    $Q$ and $\varphi_0$ are equivalent in the sense of \cite{Helffer84}*{Definition 2.10.13} by Proposition 2.10.14 in \cite{Helffer84}.
    Since all coordinate transformations are homogeneous of degree $1$, the amplitude and the functions $\psi$ and $f$ are of the same form as before.

    So, we may assume that $\phi(0,0,\theta)$ depends only on $\theta''$ and $C_{\phi,0} = \RR^{e_j}\times 0$.
    The propagator at time $t = t_0$ becomes
    \begin{align*}
        U_0(t_0) &= \int_{\RR^{e_j}} \int_{\RR^{N-e_j}} e^{i\varphi(x,y,\vartheta',\vartheta'')} a(x,\vartheta',\vartheta'') d\vartheta'd\theta''\\
        &= \int_{\RR^{e_j}} e^{i\psi(x,y,\theta',0)} \tilde{a}(x,y,\theta') d\theta',
    \end{align*}
    with
    \begin{align*}
        \tilde{a}(x,y,\theta') = \int_{\RR^{N-e_j}} e^{i\phi_0(\theta'') + i(\psi(x,y,\theta',\theta'') - \psi(x,y,\theta',0))+ if(x,y,\theta)} a(x,\theta',\theta'')d\theta''.
    \end{align*}

    We now show that $\psi(x,y,\theta',0)$ is nothing but $\phired$ in local coordinates. Since in our adapted coordinates $C_{\phi,0} = \RR^{e_j}\times 0$, we see that
    \begin{align*}
        \psi(x,y,\theta',0) = \psi(x,y,\theta)|_{C_{\phi,0}}.
    \end{align*}
    Using the diffeomorphism $\lambda_\phi : C_{\phi} \to \Lambda$, we see that
    \begin{align*}
        \exp(t_0\hamvf_0)(0,\pa_x\phi(0,0,\theta',0)) = (0,-\pa_y\phi(0,0,\theta')).
    \end{align*}
    By the inverse function theorem, there exists a map $\eta \mapsto \theta'$ such that
    \begin{align*}
        -\pa_y\phi(0,0,\theta'(\eta),0) = \eta.
    \end{align*}
    Thus, $\psi(x,y,\theta'(\eta),0) = \phired(t_0,x,y,\eta).$

    The map $\eta \mapsto \theta'$ is homogeneous of degree $1$.
    Therefore, it only remains to show that $\tilde{a}$ is a Kohn-Nirenberg symbol.
    We define the amplitude
    \begin{align*}
        c(x,y,\theta) = e^{if(x,y,\theta)} a(x,y,\theta)
    \end{align*}
    and we write the $\psi$-phase as
    \begin{align*}
        \psi(x,y,\theta',\theta'') - \psi(x,y,\theta',0) = \langle \theta'', g(x,y,\theta',\theta'')\rangle,
    \end{align*}
    with $g(x,y,\theta',\theta'') = \int_0^1 \pa_{\theta''}\psi(x,y,\theta',\theta'') dt.$
    Note that the functions $c$ and $g$ satisfy
    \begin{align*}
        \abs{\pa_x^\alpha\pa_y^\beta\pa_\theta^\gamma c(x,y,\theta)} &\lesssim_{\alpha,\beta,\gamma} \ang{\theta}^{-|\gamma|} \ang{(x,y)}^{|\alpha|+|\beta| + 2|\gamma|},\\
        \abs{\pa_x^\alpha\pa_y^\beta\pa_\theta^\gamma g(x,y,\theta)} &\lesssim_{\alpha,\beta,\gamma} \ang{\theta}^{-|\gamma|} \ang{x}^{1-|\alpha|}\ang{y}^{1-|\beta|}.
    \end{align*}

    We have to show that $\tilde{a} \in S^0$ on compact sets, in fact we will show that the following estimate holds:
    \begin{align*}
        \abs{\pa_x^\alpha\pa_y^\beta\pa_{\theta'}^\gamma \tilde{a}(x,y,\theta')} \lesssim_{\alpha,\beta,\gamma} \ang{\theta'}^{-|\gamma|} \ang{(x,y)}^{|\alpha|+|\beta| + 2|\gamma|}.
    \end{align*}
    A typical term is of the form
    \begin{align}\label{eq:a-deriv}
        I(x,y,\theta') = \int_{\RR^{N-e}} e^{i\phi_0(\theta'')}e^{i\ang{g,\theta''}} \prod_{j=1}^k \ang{\theta'',\pa^{\kappa_j}_{x,y,\theta'}g} c(x,y,\theta)d\theta''.
    \end{align}
    We use the standard Paley-Littlewood decomposition: Choose a $\tilde{\chi} \in \CcI(\RR^{N-e})$
    such that $\tilde{\chi} \geq 0$ everywhere, $\tilde{\chi}(x) = 1$ for $|x| \leq 1$, and $\tilde{\chi}(x) = 0$ for $x\geq 2$.
    Set $\chi_j(x) = \tilde{\chi}(x/2^j) - \tilde{\chi}(x/2^{j-1})$. Then
    \begin{align*}
        1 &= \tilde{\chi}(x) + \sum_{j=1}^\infty \chi_j(x)\qquad\text{for all } x \in \RR^{N-e}.
    \end{align*}
    For $\lambda = 2^j$, we have
    \begin{align*}
        I(x,y,\theta') &= \sum_{j=1}^\infty \lambda^{N-e} \int_{\RR^{N-e}} e^{i\lambda^2\phi_0(\theta'')}e^{i\lambda\ang{g,\theta''}} \chi_1(\theta'')\prod_{j=1}^k \ang{\lambda\theta'',\pa^{\kappa_j}_{x,y,\theta'}g} c(x,y,\theta',\lambda\theta'')d\theta''\\
        &+ \int_{\RR^{N-e}} e^{i\phi_0(\theta'')}e^{i\ang{g,\theta''}} \tilde{\chi}(\theta'')\prod_{j=1}^k \ang{\theta'',\pa^{\kappa_j}_{x,y,\theta'}g} c(x,y,\theta',\theta'')d\theta''.
    \end{align*}
    
    In order to estimate the sum, we observe that $\theta''$-derivatives of the function
    \begin{align*}
        e^{i\lambda\ang{g,\theta''}} \chi_1(\theta'') \prod_j \ang{\lambda\theta'',\pa^{\gamma_j}_{\theta'}g} c(x,y,\theta',\lambda\theta'')
    \end{align*}
    can be estimated by
    \begin{align*}
        \lambda^{k+|\gamma|}\ang{\theta'}^{-|\gamma|} \ang{x}^{1-|\alpha|}\ang{y}^{1-|\beta|},
    \end{align*}
    where $\kappa = (\alpha,\beta,\gamma) \in \NN^d_x\times\NN^d_y\times\NN^e_{\theta'}$ for the multiindex $\kappa = \sum_{j=1}^k \kappa_j$.
    Using Theorem 7.7.1 from \cite{Hormander1}, we obtain that for all $M > 0$, each summand can be estimated by
    \begin{align*}
        \lambda^{N-e - M} \lambda^{k+|\gamma|} \ang{\theta'}^{-|\gamma|} \ang{x}^{1-|\alpha|}\ang{y}^{1-|\beta|}.
    \end{align*}
    Choosing $M > N-e+k+|\gamma|+1$, we can sum the geometric series, which yields the desired bound.

    For the last term, we have to use the method of stationary phase.
    We note that it suffices to show that
    \begin{align*}
        \abs{\pa_x^\alpha\pa_y^\beta\pa_{\theta'}^\gamma \tilde{a}(x,y,\lambda\theta')} \leq C_{\alpha,\beta,\gamma} 
    \end{align*}
    as $\lambda \to \infty$ for $C_{\alpha,\beta,\gamma}$ independent of $\lambda$.
    We check that $c(x,y,\lambda\theta)$ and all of its derivatives are bounded by some constant independent of $\lambda$. Therefore, Proposition~\ref{prop:stat-phase} proves the claim.
\end{proof}

\begin{proof}[Proof of Proposition~\ref{prop:wf-free}]
    We have constructed a suitable parametrix of $U_0(t)$ in Proposition~\ref{prop:parametrix-free}. By Theorem~8.1.9. in \cite{Hormander3} the wavefront set of the distribution $U_0(t)$ is given by
    \begin{align*}
        \WF(U_0(t)) \subset \left\{(x,y,\Xi_t(\eta),-\eta) \colon \eta \in \Gamma_t,y-\pa_\eta\langle x, \Xi_t(\eta)\rangle \bot\, \Gamma_t\right\}.
    \end{align*}
    By the calculus of wavefront sets, we obtain that for any $(x,\eta) \in \RR^{2d}$ such that $\eta \in \Gamma_t$ and
    $(x,\Xi_t(\eta)) \in \WF(U_0(t)u)$ then $(\pa_\eta \langle x, \Xi(\eta)\rangle,\eta) \in \WF(u)$.

    Now, let $u \in \E'$. Assume that there is a $(x_0,\xi_0) \in \WF(U_0(t)u)$ such that $\xi_0 \not \in \Xi_t(\Gamma_t)$, that is
    there exists $(y_0,\eta_0) \in \RR^{2d}\setminus\{0\}$ such that $\exp(t\hamvf_0)(0,\xi_0) = (y_0,\eta_0)$ and $y_0 \not = 0$.
    By Lemma~\ref{lem:wfiso-smooth}, $(0,\xi_0) \in \WFiso(U_0(t)u)$.
    We have seen that the isotropic wavefront set is shifted by the Hamiltonian flow (Proposition~\ref{prop:wfiso-free}) and therefore $(y_0,\eta_0) = \exp(-t\hamvf_0)(0,\xi_0) \in \WFiso(u)$.
    By Definition of the set $\Gamma_t$, $y_0 \not = 0$, but this contradicts the assumption that $u$ was compactly supported using Lemma~\ref{lem:wfiso-compact}.
\end{proof}
\section{The reduced equation}
The reduced propagator $F(t) = U_0(-t)U(t)$ satisfies
\begin{equation}
    \left\{\begin{aligned}
        \left(\pa_t - U_0(-t)(H - H_0)U_0(t)\right)F(t) &= 0,\\
        F(0) &= \id.
    \end{aligned}\right.
\end{equation}
We define the operator $P(t) = U_0(-t)(H - H_0)U_0(t)$. By Proposition~\ref{prop:egorov}, $P(t) \in \calccl^1$ and the principal symbol is given by
\begin{align*}
    \sigma^1(P(t)) = p_1 \circ \exp(t\hamvf_0).
\end{align*}
\begin{proposition}\label{prop:wfiso-reduced}
    For all $u \in \schwartz'$ and $t \in \RR$,
    \begin{align*}
        \WFiso(\rd(t)u) = \WFiso(u).
    \end{align*}
\end{proposition}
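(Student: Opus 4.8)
The plan is to derive Proposition~\ref{prop:wfiso-reduced} from an Egorov-type theorem for the reduced propagator, exploiting that $F(t)$ is driven by the order-one operator $P(t) = U_0(-t)(H-H_0)U_0(t)$ together with the fact that, in the isotropic calculus, a commutator lies \emph{two} orders below the sum of the orders. Concretely, I will show that if $A_0 \in \calccl^0$, then $A(t) := F(t) A_0 F(t)^{-1} \in \calccl^0$ for every $t$, with $\sigma^0(A(t)) = \sigma^0(A_0)$. Once this is granted the proposition follows quickly. The heuristic behind the Egorov statement is immediate: $A(t)$ solves the Heisenberg equation $\pa_t A(t) = [P(t), A(t)]$, $A(0) = A_0$, and since $P(t) \in \calccl^1$ by Proposition~\ref{prop:egorov} while $A(t)$ has order $0$, the commutator $[P(t), A(t)]$ has order $1 + 0 - 2 = -1$; hence $\pa_t A(t) \in \calc^{-1}$, so $\sigma^0(A(t))$ is independent of $t$ and equals $\sigma^0(A_0)$. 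This is precisely the mechanism used in \cite{DGW17}; the argument there applies here with only notational changes, so I will merely sketch it.

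To turn the heuristic into a proof one solves the Heisenberg equation inside the calculus. Expanding a candidate $B(t) \sim \sum_{j \geq 0} b_{-j}(t)$ in homogeneous terms and noting that the leading symbol of $[P(t), B(t)]$ is $\frac{1}{i}\{\sigma^1(P(t)), b_0(t)\}$, which is homogeneous of degree $-1$, one obtains the transport hierarchy $\pa_t b_0 = 0$, hence $b_0(t) = \sigma^0(A_0)$, followed by a chain of linear ODEs in $t$ determining $b_{-1}(t), b_{-2}(t), \dots$ from the lower-order contributions, with initial data the homogeneous components of the symbol of $A_0$. Borel summation yields a family $B(t) \in \calccl^0$, smooth in $t$, with $B(0) - A_0 \in \calc^{-\infty}$ and remainder $R(t) := \pa_t B(t) - [P(t), B(t)] \in \calc^{-\infty}$. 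A Duhamel argument then gives $A(t) - B(t) = F(t)\, C(t)\, F(t)^{-1}$ with $C(t) = (A_0 - B(0)) - \int_0^t F(s)^{-1} R(s) F(s)\, ds$; since $F(s)^{\pm 1}$ are unitary on $L^2$ and preserve $\schwartz$ and $\schwartz'$, $C(t)$ — and therefore $A(t) - B(t)$ — lies in $\mathcal{L}(\schwartz', \schwartz)$, so $A(t) = B(t) \bmod \calc^{-\infty} \in \calccl^0$ with $\sigma^0(A(t)) = \sigma^0(A_0)$. The one point where the special structure of the isotropic calculus is essential — and the step that I expect to be the real work — is exactly that $R(t)$ is genuinely smoothing: $P(t)$ has positive order, so a plain $L^2$ energy estimate does not control the error, and one must use the two-orders drop of the isotropic commutator to close the construction.

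Granting the Egorov statement, here is the deduction. If $(x_0, \xi_0) \notin \WFiso(u)$, choose $Q \in \calccl^0$ with $\sigma^0(Q)(x_0, \xi_0) = 1$ and $Qu \in \schwartz$. Then $\tilde Q := F(t) Q F(t)^{-1} \in \calccl^0$, $\sigma^0(\tilde Q)(x_0, \xi_0) = 1$, and $\tilde Q\, F(t) u = F(t)(Qu) \in \schwartz$ because $F(t)$ maps $\schwartz$ into $\schwartz$; hence $(x_0, \xi_0) \notin \WFiso(F(t)u)$, and $\WFiso(F(t)u) \subset \WFiso(u)$. For the opposite inclusion one runs the same argument with $F(t)^{-1}$ in place of $F(t)$, provided conjugation by $F(t)^{-1}$ also preserves $\calccl^0$ and the principal symbol; this follows from the Egorov statement at time $-t$ together with the identity $F(t)^{-1} = U_0(-t) F(-t) U_0(t)$ and Proposition~\ref{prop:egorov} applied to the two outer conjugations (the shifts $\exp(\pm t \hamvf_0)$ on the principal symbol cancel). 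Therefore $\WFiso(u) \subset \WFiso(F(t)u)$ as well, and the two sets coincide.
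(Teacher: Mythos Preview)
Your proposal is correct and follows the same route as the paper: the paper's proof is a one-line citation of Lemma~3.1 in \cite{DGW17}, and you have sketched precisely the Egorov-type argument behind that lemma, resting on the two-order drop of the isotropic commutator so that $\sigma^0(F(t)A_0F(t)^{-1}) = \sigma^0(A_0)$. Your handling of the reverse inclusion via $F(t)^{-1} = U_0(-t)F(-t)U_0(t)$ is fine, though one could equally note that once conjugation by $F(t)$ is shown to be an automorphism of $\calccl^0/\calc^{-\infty}$ preserving the principal symbol, the same holds for its inverse.
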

\begin{proof}
    This follows from Lemma 3.1 in \cite{DGW17}.
\end{proof}
\begin{proposition}\label{prop:sghyper}
    Let $a \in \C([0,T],\symbclsg^{1,1})$ be real-valued and assume that there is a bounded set $K \subset \symbsg^{1,1}$ such that for all $|t| \leq T$, $a(t) \in K$
    Consider for $u_0 \in \schwartz'$ the equation
    \begin{equation}\label{eq:sghyper}
        \left\{\begin{aligned}
            (i\pa_t - a(t,x,D))u(t) &= 0,\\
            u(0) &= u_0.
        \end{aligned}\right.
    \end{equation}
    Let $u \in C(\RR, \schwartz')$ be a solution of \eqref{eq:sghyper}. The wavefront set of $u(t)$ is given by
    \begin{align*}
        \WF(u(t)) &= \Psi_t \WF(u_0),
    \end{align*}
    where $\Psi_t$ is the Hamiltonian flow associated to the function $\sigma^\psi(a(t,x,D))$.
\end{proposition}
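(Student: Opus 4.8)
The plan is to establish the inclusion $\WF(u(t))\subset\Psi_t\WF(u_0)$ by an Egorov-type (``commutator'') argument inside the SG-calculus, and then to deduce the reverse inclusion by time reversal. Indeed, if $u$ solves \eqref{eq:sghyper} then $\tilde u(s):=u(t-s)$ solves the same type of equation with generator $\tilde a(s):=-a(t-s)$, again a real-valued family taking values in a bounded subset of $\symbclsg^{1,1}$ and whose principal Hamiltonian flow is the inverse of $\Psi_t$; since $\tilde u(0)=u(t)$ and $\tilde u(t)=u_0$, the inclusion applied to $\tilde u$ reads $\WF(u_0)\subset\Psi_t^{-1}\WF(u(t))$, that is $\Psi_t\WF(u_0)\subset\WF(u(t))$. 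This is the time-reversal device already used in the proof of Proposition~\ref{prop:wfiso-free}, so it suffices to prove the first inclusion.

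For that inclusion I would fix $(x_0,\xi_0)\notin\Psi_t\WF(u_0)$, let $(y_0,\eta_0)$ be the unique point with $\Psi_t(y_0,\eta_0)=(x_0,\xi_0)$ (so $(y_0,\eta_0)\notin\WF(u_0)$), and choose $Q_0\in\calcsg^{0,-\infty}$ with symbol compactly supported in $x$ near $y_0$, conic in $\xi$ near $\eta_0$, non-vanishing at $(y_0,\eta_0)$, microsupported so small that $\Psi_s$ is defined on $\operatorname{supp}\sigma^\psi(Q_0)$ for all $s\in[0,t]$, and such that $Q_0u_0\in\schwartz$. The core of the argument is to produce a family $Q(s)\in\calcsg^{0,-\infty}$, $s\in[0,t]$, smooth in $s$ and bounded in the calculus, with $Q(0)=Q_0$ and
\[
    R(s):=\pa_s Q(s)+i[a(s,x,D),Q(s)]\in\calcsg^{-\infty,-\infty}
\]
uniformly in $s$. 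By property (iv) of the SG-calculus, writing $h(s)=\sigma^\psi(a(s,x,D))$, the principal part of this Heisenberg-type equation is the transport equation $\pa_s q+\{h(s),q\}=0$, whose solution with datum $\sigma^\psi(Q_0)$ is $q(s,\cdot)=\sigma^\psi(Q_0)\circ\Psi_s^{-1}$; since $\Psi_s$ is homogeneous of degree $0$ in $\xi$ in its base component and of degree $1$ in its fibre component, this is again a classical symbol in $\symbsg^{0,-\infty}$, with $x$-support near $x_0$ at $s=t$ and non-vanishing at $(x_0,\xi_0)$. The lower-order terms are obtained by solving inhomogeneous transport equations along the flow with vanishing initial data, and a Borel summation in the $\psi$-order produces $Q(s)$; here one uses that $a(s)$ has order $(1,1)$, so that each commutator with $Q$ gains one order in $\psi$ while staying of order $-\infty$ in $e$.

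Next I would set $v(s)=Q(s)u(s)$. Using $i\pa_s u=a(s)u$ one computes $(i\pa_s-a(s))v(s)=iR(s)u(s)=:g(s)$; since $R(s)$ is regularizing uniformly in $s$ and $u\in C(\RR,\schwartz')$, we get $g\in C([0,t],\schwartz)$, while $v(0)=Q_0u_0\in\schwartz$. It remains to deduce $v(t)\in\schwartz$, and for this I would invoke the well-posedness of the Cauchy problem for $i\pa_s-a(s)$ on the SG-Sobolev scale: as $a(s,x,D)$ is the Weyl quantization of a real symbol it is self-adjoint, so its propagator is unitary on $L^2$; conjugating the equation by $\ang{x}^{s_e}\ang{D}^{s_\psi}$ and using property (iv) (the resulting commutator term lies in $\calcsg^{0,0}$ and is $L^2$-bounded uniformly in $s$ thanks to the bound $K$) yields by Grönwall that the propagator is bounded on every $\sobsg^{s_\psi,s_e}$, uniformly for $s\in[0,t]$. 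By property~\ref{it:limitsob} ($\bigcap_{m_\psi,m_e}\sobsg^{m_\psi,m_e}=\schwartz$) it maps $\schwartz$ into $\schwartz$, and, since the same energy estimate gives uniqueness in $C([0,t],\schwartz')$, Duhamel's formula for the equation satisfied by $v$ forces $v(t)\in\schwartz$. Because $Q(t)\in\calcsg^{0,-\infty}$, $Q(t)u(t)=v(t)\in\schwartz$ and $(x_0,\xi_0)\notin\Sigma_\psi(Q(t))$, the characterization of the classical wavefront set in the SG-calculus gives $(x_0,\xi_0)\notin\WF(u(t))$, as wanted.

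The hard part will be this last step: turning ``$v$ solves an SG-hyperbolic equation with Schwartz data and Schwartz right-hand side'' into ``$v(t)\in\schwartz$''. It requires the propagator of $i\pa_s-a(s)$ to act boundedly on the whole SG-Sobolev scale with constants uniform in $s\in[0,t]$, which is exactly where the hypotheses that $a(s)$ is real-valued (giving self-adjointness, hence the sharp $L^2$ estimate) and that $\{a(s)\}$ is bounded in $\symbclsg^{1,1}$ (controlling the commutator errors) enter; this is the quantitative heart of the proposition. By contrast, the construction of the quantized transport $Q(s)$, although the longest part to write out, is a routine iteration in the calculus once the two orders are tracked correctly.
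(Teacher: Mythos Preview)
Your proposal is correct and follows essentially the same route as the paper: an Egorov-type construction of a time-dependent operator $Q(s)\in\calcsg^{0,-\infty}$ solving the transport hierarchy so that $[\,i\pa_s-a(s),Q(s)\,]$ is residual, followed by an energy estimate to conclude $Q(t)u(t)\in\schwartz$, with equality obtained by time reversal. The only visible difference is that the paper simply cites H\"ormander's energy estimate (Theorem~23.1.2 in \cite{Hormander3}) for the last step, whereas you sketch it directly in the SG-scale via conjugation by $\ang{x}^{s_e}\ang{D}^{s_\psi}$ and Gr\"onwall; your version makes more transparent where the real-valuedness of $a$ and the uniform bound $a(s)\in K$ are actually used.
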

\begin{remark}
    If we exchange the role $x$ and $\xi$, we can prove that $\WF_e$ evolves according to the Hamiltonian flow $\Psi_t^e$ of the symbol $\sigma^e(a(t,x,D))$,
    \begin{align*}
        \WF_e(u(t)) = \Psi_t^e \WF_e(u_0),
    \end{align*}
    under the assumption that $a$ admits an asymptotic expansion into terms homogeneous in $x$.
\end{remark}
Using the fact that the SG-estimates are weaker then the isotropic estimates (\cf \cite{NiRo10}*{Section 3.1}) we obtain the propagation of singularities result for $F(t)$:
\begin{proof}[Proof of Proposition~\ref{prop:wf-reduced}]
    The symbol  $p(t) \in \symbcl^1 \subset \symbsg^{1,1}$ has principal $\psi$-symbol $(p_1\circ\exp(t\hamvf_0))(0,\xi)$
    and by Proposition~\ref{prop:sghyper}
    the wavefront set is given by \[\WF(F(t)u) = \left\{ (x + \pa_\xi X_tp_1(0,\xi),\xi) \colon (x,\xi) \in \WF(u)\right\}.\]
\end{proof}
Proposition~\ref{prop:sghyper} also follows from \cite{CoMa03}. We give a self-contained proof using a commutator argument (\cf Hörmander~\cite{Hormander3}*{Theorem 23.1.4}).
\begin{proof}
    Let $(x_0,\xi_0) \not \in \WF(u_0)$ then there is a symbol $b \in \symbclsg^{0,-\infty}$ such that
    $b(x_0,\xi_0) = 1$ and $b(x,D)u_0 \in \schwartz$.

    We construct a symbol $q \in \CI([0,T],\symbclsg^{0,-\infty})$ with the following properties
    \begin{itemize}
        \item $[i\pa_t - a(t,x,D), q(t,x,D)] \in \C([0,T],\calcsg^{-\infty,-\infty})$,
        \item $q(0,x,\xi) = b(x,\xi)$,
        \item $\sigma^\psi q(t) = \Psi_t \sigma^\psi b$.
    \end{itemize}
    If we write $q(t,x,\xi) \sim \sum_j q_{-j}(t,x,\xi)$ where $q_{-j}(t) \in \symbclsg^{-j,-\infty}$ is homogeneous of degree $-j$ in $\xi$, 
    we can see that the $\psi$-principal symbol of the commutator is given by
    \begin{align*}
        \sigma^{\psi}([i\pa_t - a(t,x,D), q(t,x,D)]) = i \{\tau + a_0(t,x,\xi), q_0(t,x,\xi)\} = i (\pa_t + H_{a_0})q_0,
    \end{align*}
    where $H_{a_0} = \pa_\xi a_0 \pa_x - \pa_x a_0 \pa_\xi$ is the Hamiltonian vector field of $a_0$. The term of order $-j$ is given by
    \begin{align*}
        i(\pa_t + H_{a_0})q_{-j} + R_j
    \end{align*}
    with $R_j$ depending on $q_0,\dotsc,q_{-j+1}$. By the assumption that $a(t)$ is contained in a fixed bounded set for all $|t| \leq T$,
    the equations
    \begin{equation*}
        \left\{\begin{aligned}
            (\pa_t y,\pa_t \eta) &= (\pa_\xi a_0(t,y,\eta),-\pa_x a_0(t,y,\eta))\\
            (y(0),\eta(0)) &= (x,\xi)
        \end{aligned}\right.
    \end{equation*}
    have a unique solution for time $|t| \leq T$. The map $\Psi_t(x,\xi) = (y(t,x,\xi),\eta(t,x,\xi))$ is the Hamiltonian flow of the principal symbol
    $\sigma^\psi(a(t,x,D))$ and
    defines a symplectomorphism, which is homogeneous in the second component, $(y(t,x,\lambda\xi),\eta(t,x,\lambda\xi)) = (y(t,x,\xi),\lambda \eta(t,x,\xi))$.
    If we set $q_0 = b_0(\Psi_t^{-1}(x,\xi))$ then $q_0$ solves
    \begin{equation*}
        \left\{\begin{aligned}
            (\pa_t + H_{a_0})q_0(t) &= 0,\\
            q_0(0) &= b_0.
        \end{aligned}\right.
    \end{equation*}
    Similarly, we solve the inhomogeneous equations for $q_{-j}$, $j>0$ by
    \begin{align*}
        q_{-j}(t,x,\xi) = b_{-j}(\Psi_t^{-1}(x,\xi)) + i\int_0^t R_j(\Psi_{t-s}^{-1}(x,\xi)) ds.
    \end{align*}
    If we set $q(t,x,\xi) \sim \sum_{j = 0}^\infty q_{-j}(t,x,\xi)$ we obtain a symbol with the desired properties. This implies that
    if $u(t)$ is a solution to \eqref{eq:sghyper} with initial data $u_0$ then
    \begin{equation*}
        \left\{\begin{aligned}
            (i\pa_t - a(t,x,D)) q(t,x,D)u(t) &\in \C([0,T],\schwartz(\RR^d)),\\
            q(0,x,D)u(0) &\in \schwartz(\RR^d).
        \end{aligned}\right.
    \end{equation*}
    Using an energy estimate (\cf Hörmander~\cite{Hormander3}*{Theorem 23.1.2})
    we conclude that $q(t,x,D)u(t) \in H^{s_\psi,s_e}$ for every $s_\psi,s_e \in \RR$
    and thus $q(t,x,D)u(t) \in \schwartz$.
    This implies by the construction that $\Psi_t(x_0,\xi_0) \not \in \WF(u(t))$.

    The whole argument can be carried out if we replace $t$ by $-t$ and therefore we obtain equality of the wavefront sets.
\end{proof}
\section{Stationary phase with inhomogeneous phase function}
We will derive a formula for calculating stationary phase integrals
\begin{align*}
    I(\lambda,y) = \lambda^d\int_{\RR^d} e^{i\lambda^2\phi(x) + i\lambda(\psi(x,y)-\psi(0,y))} a(\lambda,x,y) dx.
\end{align*}
The function $a$ is smooth and satisfies an estimate $|\pa_x^{\alpha}\pa_y^\beta a(\lambda,x,y)|\leq C_{\alpha,\beta} \lambda^{m}$
and we assume that there is a compact set $K \subset \RR^d\times\RR^n$ such that for every $\lambda \in \RR$, $\supp a(\lambda) \subset K$.
The phase functions $\phi \in \CI(\RR^d)$ and $\psi \in \CI(\RR^d\times \RR^n)$ satisfy
\begin{itemize}
    \item $\phi$ and $\psi$ are real-valued,
    \item $\phi(0) = \pa_x\phi(0) = 0$,
    \item $\pa_{xx}\phi(0)$ is non-singular.
\end{itemize}
\begin{proposition}\label{prop:stat-phase}
    For every $\alpha \in \NN^n$,
    \begin{align*}
        |\pa_y^\alpha I(\lambda,y)| \lesssim_\alpha \lambda^m.
    \end{align*}
\end{proposition}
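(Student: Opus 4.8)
The plan is to localize the $x$-integration near the critical set of $\phi$ and then run the method of stationary phase with the large parameter $\mu = \lambda^2$, carrying the parameter $y$ along. First I would split $a = a_0 + a_1$ by a partition of unity in $x$, with $a_0$ supported in a small ball $B$ around the origin and $a_1$ supported in the region where $\pa_x\phi \ne 0$. This decomposition is available in the situation where the proposition is applied (the reduction in the proof of Proposition~\ref{prop:parametrix-free} makes $\phi$ equivalent to a nondegenerate quadratic form, whose only critical point is the origin); in general one would instead cover $K$ by $B$ together with small neighborhoods of the remaining nondegenerate critical points of $\phi$ and repeat the argument below near each.

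On $\supp a_1$ the gradient of the full phase is $\lambda^2(\pa_x\phi + \lambda^{-1}\pa_x\psi)$, which is $\ge c\lambda^2$ for $\lambda$ large and some $c>0$. Repeated non-stationary-phase integration by parts in $x$ then gains a net factor $\lambda^{-1}$ per step: a derivative hitting $e^{i\lambda(\psi(x,y)-\psi(0,y))}$ costs a factor $\lambda$, but this is outweighed by the $\lambda^{-2}$ produced by the denominator. Since on $\supp a_1$ the finitely many $y$-derivatives only cost bounded powers of $\lambda$, the $a_1$-contribution is $O(\lambda^{-\infty})$, uniformly in $y$ together with all $y$-derivatives.

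For $a_0$ I would write the phase as $\lambda^2 \Phi(x,y,\lambda)$ with $\Phi = \phi + \lambda^{-1}(\psi(\cdot,y)-\psi(0,y))$. Since $\pa_x\phi(0) = 0$ and $\pa_{xx}\phi(0)$ is invertible, the implicit function theorem applied to $\pa_x\Phi = 0$ produces, for $\lambda \geq \lambda_0$ and $y \in K$, a unique critical point $x_c = x_c(y,\lambda) \in B$ which is $O(\lambda^{-1})$, depends smoothly on $(y,\lambda^{-1})$ with uniformly bounded derivatives, and has $\pa_{xx}\Phi(x_c,y,\lambda)$ uniformly invertible. Stationary phase with parameters (\cf \cite{Hormander1}*{Section 7.7}) with $\mu = \lambda^2$ then gives
\begin{align*}
    \lambda^d \int e^{i\lambda^2\Phi(x,y,\lambda)} a_0(\lambda,x,y)\, dx = \lambda^d \mu^{-d/2} e^{i\lambda^2\Phi(x_c,y,\lambda)} b(\lambda,y) = e^{i\lambda^2\Phi(x_c,y,\lambda)} b(\lambda,y),
\end{align*}
where $b$ and all its $y$-derivatives are $O(\lambda^m)$, since $\abs{\pa_x^\alpha\pa_y^\beta a_0} \le C_{\alpha,\beta}\lambda^m$ and the stationary phase remainder only differentiates the amplitude. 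The decisive point is that the unimodular prefactor has bounded $y$-derivatives: because $\phi(0) = \pa_x\phi(0) = 0$ and $\psi(x,y) - \psi(0,y)$ vanishes at $x=0$, we get $\Phi(x_c,y,\lambda) = \phi(x_c) + \lambda^{-1}(\psi(x_c,y)-\psi(0,y)) = O(\abs{x_c}^2) + \lambda^{-1}O(\abs{x_c}) = O(\lambda^{-2})$, and likewise $\pa_y[\Phi(x_c(y,\lambda),y,\lambda)] = (\pa_y\Phi)(x_c,y,\lambda) = O(\lambda^{-2})$ by the critical-point equation $\pa_x\Phi(x_c) = 0$, and similarly for higher $y$-derivatives. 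Hence $\lambda^2\Phi(x_c,\cdot,\lambda)$ and all its $y$-derivatives are $O(1)$, so $\abs{\pa_y^\alpha e^{i\lambda^2\Phi(x_c,y,\lambda)}} \lesssim_\alpha 1$, and combining all contributions yields $\abs{\pa_y^\alpha I(\lambda,y)} \lesssim_\alpha \lambda^m$.

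The main obstacle I expect is precisely this last bookkeeping: showing that $\lambda^2\Phi(x_c,y,\lambda)$ stays bounded, and stays bounded after differentiating in $y$. This is exactly where the normalization $\phi(0) = \pa_x\phi(0) = 0$ and the fact that $\psi$ enters only through the difference $\psi(x,y)-\psi(0,y)$ are used — one needs the ``phase defect'' at the critical point to be $O(\lambda^{-2})$ so that it survives multiplication by $\mu = \lambda^2$ — and it is also what forces localizing the $x$-integral around the origin in the first step. The implicit function theorem step and the non-stationary phase estimate are routine by comparison.
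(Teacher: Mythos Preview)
Your argument is correct and follows essentially the same route as the paper's: both introduce the perturbed phase $\Phi_\mu = \phi + \mu(\psi(\cdot,y)-\psi(0,y))$ with $\mu=\lambda^{-1}$, locate its critical point $x_c = O(\mu)$ by the implicit function theorem, check that the critical value and its $y$-derivatives are $O(\mu^2)$, and cancel $\lambda^d$ against $(\lambda^2)^{-d/2}$ via stationary phase. The only cosmetic difference is the order of operations---the paper differentiates in $y$ first and then applies stationary phase, whereas you apply stationary phase and then differentiate; your explicit near/far localization in $x$ is a point the paper leaves implicit.

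One small caveat: your aside that one could ``repeat the argument below near each'' of the other critical points of $\phi$ is not right. The whole mechanism rests on $\psi(x_c,y)-\psi(0,y) = O(|x_c|) = O(\lambda^{-1})$, which fails at a critical point $x_1\neq 0$; there a single $y$-derivative of the oscillatory factor genuinely costs a factor $\lambda$, and the conclusion $\lesssim\lambda^m$ breaks down. As you correctly observe, in the application $\phi$ is a nondegenerate quadratic form with the origin as its only critical point, so this does not affect the proof.
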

\begin{proof}
    Define the phase function
    \begin{align*}
        \Phi_\mu(x,y) = \phi(x) + \mu (\psi(x,y) - \psi(0,y)),
    \end{align*}
    for $\mu > 0$ small enough the matrix $\pa_{xx}\Phi_\mu$ is invertible and therefore we may apply the regular value theorem to obtain a map
    $(\mu,y)\mapsto x(\mu,y)$ parametrizing $C_{\Phi_\mu} = \{\pa_x \Phi_\mu = 0\}$.
    Expanding $x(\mu,y)$ into powers of $\mu$ yields
    \begin{align*}
        x(\mu,y) = \mu\tilde{x}(\mu,y), \qquad \tilde{x} \in \CI(\overline{\RR_+} \times \RR^n, \RR^d).
    \end{align*}

    The assumptions on $\phi$ imply that
    \begin{align*}
        \Phi_\mu|_{C_{\Phi_\mu}} &= \phi(\mu \tilde{x}(\mu,y)) + \mu(\psi(\mu\tilde{x}(\mu,y),y) - \psi(0, y))\\
        &= \phi(0) + \mu \pa_x\phi(0) + O(\mu^2)\\
        &= O(\mu^2).
    \end{align*}

    Now, we can estimate $I(\lambda,y)$ and its derivatives:
    The only case where derivatives could cause problems is if it falls on the exponential:
    \begin{align*}
        \lambda^{d+l} \int_{\RR^d} e^{i\lambda^2\phi(x)+i\lambda(\psi(x,y)-\psi(0,y))} \prod_{j=1}^l\left(\pa_{y_{i_j}}(\psi(x,y)-\psi(0,y))\right) a(x,y) dx.
    \end{align*}
    We apply the method of stationary phase and see that each term $\pa_y(\psi(x,y) - \psi(0,y))$ is of order $O(\lambda^{-1})$ since
    \begin{align*}
        \pa_y(\psi(x,y) - \psi(0,y))|_{C_{\Phi_\mu}} = \mu\pa_{x,y}\psi(0,y) + O(\mu^2)
    \end{align*}
    and in our case $\mu = \lambda^{-1}$, further the stationary phase eliminates the prefactor of $\lambda^d$ and obtain
    \begin{align*}
        |\pa_y^{\alpha}I(\lambda,y)| \lesssim \lambda^m
    \end{align*}
    as claimed.
\end{proof}
\section{Examples}\label{sec:ex}
We will consider specific cases of $p \in \symbcl^2$ to illustrate the results.
\subsection{Isotropic harmonic oscillator}
We consider the free Hamiltonian
\begin{align*}
    H_0 = \frac{1}{2}\left(\Lap + |x|^2\right),
\end{align*}
with principal symbol $p_2 = 1/2(|x|^2 + |\xi|^2)$.
It is well-known (\cf Grigis--Sjöstrand \cite{GrSj94}*{Chapter 11}) that the propagator is smoothing for $t \not \in \pi \ZZ$ and
\begin{align*}
    e^{-ik\pi H_0} = (-iR)^k,
\end{align*}
where $Rf(x) = f(-x)$ is the reflection operator.
This implies that for $u \in \schwartz'$ such that $\WFiso(u) \subset 0\times \RR^d$, the wavefront set of $e^{-itH_0}$ is given by
\begin{align*}
    \WF(e^{-itH_0}u) =
    \begin{cases}
        (-1)^k\WF(u) & t = \pi k, k \in \ZZ,\\
        \emptyset & t \not \in \pi\ZZ.
    \end{cases}
\end{align*}
Proposition~\ref{prop:wf-reduced} then implies
\begin{corollary}
    Let $u \in \schwartz'$ such that $\WFiso(u) \subset 0\times \RR^d$ then
    \begin{align*}
        \WF(e^{-i\pi kH}u) = \left\{ (-1)^k \left(x + \pa_\xi (\Xray_{\pi k} p_1)(0,\xi), \xi\right) \colon (x,\xi) \in \WF(u)\right\}
    \end{align*}
    and $\WF(e^{-itH}u) = \emptyset$ if $t \not \in \pi \ZZ$.
\end{corollary}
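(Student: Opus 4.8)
The plan is to combine the explicit description of the free propagator of the isotropic harmonic oscillator with the two structural results already established: Proposition~\ref{prop:wf-reduced} for the reduced propagator $F(t) = U_0(-t)U(t)$ and Proposition~\ref{prop:wfiso-reduced} for its isotropic wavefront set. Everything rests on the factorization $U(t) = U_0(t)F(t)$, which is immediate from the definition of $F(t)$, so that $e^{-itH}u = U_0(t)\bigl(F(t)u\bigr)$.

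\emph{Case $t = \pi k$, $k \in \ZZ$.} Here $U_0(\pi k) = (-iR)^k$, hence $e^{-i\pi k H}u = (-iR)^k\bigl(F(\pi k)u\bigr)$. The unimodular scalar $(-i)^k$ does not affect the wavefront set, and $R$ is the linear substitution $x \mapsto -x$, so $\WF(Rv) = \{(-x,-\xi) \colon (x,\xi)\in\WF(v)\}$ for every $v \in \schwartz'$; iterating gives $\WF\bigl((-iR)^k v\bigr) = (-1)^k\WF(v)$. Now apply Proposition~\ref{prop:wf-reduced} at time $t = \pi k$ to obtain $\WF(F(\pi k)u) = \{(x + \pa_\xi(\Xray_{\pi k}p_1)(0,\xi),\xi) \colon (x,\xi)\in\WF(u)\}$, and negate the covectors when $k$ is odd; this is precisely the asserted formula.

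\emph{Case $t \notin \pi\ZZ$.} By Proposition~\ref{prop:wfiso-reduced}, $\WFiso(F(t)u) = \WFiso(u) \subset \{0\}\times\RR^d$, so $F(t)u$ satisfies the hypothesis of the wavefront formula for $e^{-itH_0}$ displayed just before the corollary; that formula gives $\WF\bigl(e^{-itH_0}(F(t)u)\bigr) = \emptyset$ for $t \notin \pi\ZZ$, and since $e^{-itH}u = e^{-itH_0}(F(t)u)$ we are done. (Alternatively one may avoid quoting the $H_0$ formula: by Proposition~\ref{prop:wfiso-free}, $\WFiso(e^{-itH}u) = \exp(t\hamvf_0)\WFiso(F(t)u)$, and for $p_2 = \frac{1}{2}(|x|^2+|\xi|^2)$ the flow $\exp(t\hamvf_0)$ rotates each $(x_j,\xi_j)$-plane by the angle $t$, carrying $\{0\}\times\RR^d$ to $\{((\sin t)\eta,(\cos t)\eta) \colon \eta\in\RR^d\}$; when $\sin t \neq 0$ this meets $\{0\}\times\RR^d$ only at the origin, so $\WFiso(e^{-itH}u)\cap(\{0\}\times\RR^d) = \emptyset$ and Lemma~\ref{lem:wfiso-smooth} with $\Gamma = \RR^d\setminus\{0\}$ forces $\WF(e^{-itH}u) = \emptyset$.)

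The proof is essentially bookkeeping once Propositions~\ref{prop:wf-reduced} and \ref{prop:wfiso-reduced} are in hand; the only small points needing care are the invariance of the classical wavefront set under the reflection $R$ and the elementary verification that the harmonic-oscillator Hamiltonian flow moves $\{0\}\times\RR^d$ off itself exactly for $t \notin \pi\ZZ$. I do not anticipate any genuine obstacle: all of the analytic work has already been carried out in the earlier sections.
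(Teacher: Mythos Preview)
Your proposal is correct and follows exactly the approach the paper intends: the paper derives the corollary simply by saying ``Proposition~\ref{prop:wf-reduced} then implies'' after displaying the explicit $\WF$ formula for $e^{-itH_0}$ under the hypothesis $\WFiso(u)\subset\{0\}\times\RR^d$. You have merely made explicit the one point the paper leaves tacit, namely that Proposition~\ref{prop:wfiso-reduced} is needed to ensure $F(t)u$ still satisfies this hypothesis so that the $H_0$ formula applies to it.
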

This was already proved in \cite{DGW17} using an explicit parametrix of the reduced propagator.
\subsection{Anisotropic harmonic oscillator}
Now we take the principal symbol $p_2 \in \symbcl^2(\RR^d)$ with
\begin{align*}
    p_2 = \frac{1}{2}\left(|\xi|^2 + \sum_{j=1}^d \omega_j^2x_j^2\right).
\end{align*}
The Hamiltonian flow of $p_2$ is given by
\begin{align*}
    x_j(t) &= \cos(\omega_j t) x_j(0) + \frac{\sin(\omega_j t)}{\omega_j} \xi_j(0),\\
    \xi_j(t) &= \cos(\omega_j t) \xi_j(0) + \omega_j\sin(\omega_j t) x_j(0).
\end{align*}
Again by Mehler's formula, we have an explicit solution operator:
\begin{align*}
    U_0(t) = \int e^{i(\phi(t,x,\eta) - y\eta)} a(t) \,d\eta
\end{align*}
with
\begin{align*}
    \phi(t,x,\eta) = \sum_{j=1}^d \frac{1}{\cos(\omega_j t)} \left( x_j \eta_j - 1/2\sin(\omega_j t) (\omega_j x_j^2 + \omega^{-1}_j \eta_j^2)\right)
\end{align*}
and $a(t) = \prod_{j=1}^d |\cos(\omega_j t)|^{1/2}$.

For simplicity, assume that $d = 2$ and $\omega_j = j$. Then the flow is periodic with minimal period
$2\pi$ and the propagator is given at the recurrence points by
\begin{align*}
    e^{-i\pi\frac{k}2H_0} = (-iR)^k \otimes (i^{-1/2} \F)^k,\qquad k \in \ZZ.
\end{align*}
That means that $e^{-i\pi\frac{k}{2}H_0}u(x,y) = e^{i\pi k/4} (\F_y)^k u( (-1)^k x, y)$. Note that we take the unitary Fourier transform
\begin{align*}
    \F u(\xi) = (2\pi)^{-d/2} \int e^{-ix\xi} u(x) \,dx.
\end{align*}
From this, we indentify the wavefront set of $e^{-itH_0} u$ for compactly supported initial data $u \in \E'$ as follows:
\begin{align*}
    \WF(e^{-itH_0}u) = \begin{cases}
        \left\{ -(x,y,\xi,0)\colon (x,z,\xi,0) \in \WF(u)\text{ for some }z\in \RR\right\}, & t \in \pi/2 + \pi\ZZ,\\
        \left\{ (x,(-1)^k y,\xi,(-1)^k \eta)\colon (x,y,\xi,\eta) \in \WF(u)\right\}, & t = \pi k, k \in \ZZ,\\
        \emptyset, & t \not \in \frac{\pi}2 \ZZ.
    \end{cases}
\end{align*}
For the first case we have assumed that $(-x,-y) \in \supp e^{-itH_0}u$.

Let $p \in \symbcl^2$ with principal symbol $p_2$ as above and set $H = p^w(x,D)$.
Using Proposition~\ref{prop:wf-reduced} we can calculate the wavefront set of $e^{-itH}u$ in terms of the wavefront set of $u$.
This contrasts the case of potential perturbations, where even smooth compactly supported potential can give rise to new singularities (\cf Zelditch~\cite{Zelditch83}, Doi~\cite{Doi03}) and we can determine the singularities at time $t=\pi/2$, which was not possible in \cite{Doi03}.

\begin{bibdiv}
\begin{biblist}
\bibselect{biblist}
\end{biblist}
\end{bibdiv}

\end{document}